\newtheorem{thm}{Theorem}[section]
\newtheorem{lem}[thm]{Lemma}
\newtheorem{defi}{Definition}
\def\blue{\textcolor{blue}}
\def\red{\textcolor{red}}
\def\magenta{\textcolor{magenta}}
\numberwithin{equation}{section}
\def\da{\mathrm{da}}
\def\dd{\mathrm{dd}}
\def\M{\mathrm{M}}
\def\V{\mathrm{V}}
\def\red{\mathrm{red}}
\def\ln{\mathrm{ln}}
\def\des{\mathrm{des}}
\def\asc{\mathrm{asc}}
\def\asc{\mathrm{asc}}
\def\lmin{\mathrm{lmi}}
\def\rmin{\mathrm{rmi}}
\def\RMI{\mathrm{RMI}}
\def\LMI{\mathrm{LMI}}
\def\lma{\mathrm{lma}}
\def\rma{\mathrm{rma}}
\def\S{\mathfrak{S}}
\def\P{\mathcal{P}}
\def\L{\mathcal{L}}
\def\Orb{\mathrm{Orb}}
\def\inv{\mathrm{inv}}
\def\a{\alpha}
\def\b{\beta}
\def\Z{\mathbb Z}
\def\exc{\mathrm{exc}}
\def\cyc{\mathrm{cyc}}
\def\B{\mathfrak{B}}
\def\maj{\mathrm{maj}}
\def\ai{\mathrm{ai}}
\begin{document}

\title[A $q$-analog of the Stirling-Eulerian Polynomials]{A $q$-analog of the Stirling-Eulerian Polynomials}

\author[Y. Dong]{Yao Dong}
\address[Yao Dong]{Research Center for Mathematics and Interdisciplinary Sciences, Shandong University \& Frontiers Science Center for Nonlinear Expectations, Ministry of Education, Qingdao 266237, P.R. China}
\email{y.dong@mail.sdu.edu.cn}

\author[Z. Lin]{Zhicong Lin}
\address[Zhicong Lin]{Research Center for Mathematics and Interdisciplinary Sciences, Shandong University \& Frontiers Science Center for Nonlinear Expectations, Ministry of Education, Qingdao 266237, P.R. China}
\email{linz@sdu.edu.cn}

\author[Q.Q. Pan]{Qiongqiong Pan}
\address[Qiongqiong Pan]{College of Mathematics and Physics, Wenzhou University, Wenzhou 325035, P.R. China}
\email{qpan@wzu.edu.cn}

\date{\today}
\begin{abstract}
In 1974, Carlitz and Scoville introduced the Stirling-Eulerian polynomial $A_n(x,y|\a,\b)$ as the enumerator of permutations by descents, ascents, left-to-right maxima and right-to-left maxima. Recently, Ji considered a refinement  of $A_n(x,y|\a,\b)$, denoted $P_n(u_1,u_2,u_3,u_4|\a,\b)$, which is the enumerator of permutations by valleys, peaks, double ascents, double descents, left-to-right maxima and right-to-left maxima. Using Chen's context-free grammar calculus, Ji proved a formula for the  generating function of $P_n(u_1,u_2,u_3,u_4|\a,\b)$, generalizing the work of Carlitz and Scoville. Ji's formula has many nice  consequences, one of which is an intriguing  $\gamma$-positivity expansion for $A_n(x,y|\a,\b)$. In this paper, we prove a $q$-analog of Ji's formula by using Gessel's $q$-compositional formula and provide a combinatorial approach to her $\gamma$-positivity expansion of $A_n(x,y|\a,\b)$.
\end{abstract}

\keywords{Stirling-Eulerian polynomials; $\gamma$-positivity; inversions; descents; left-to-right minima; right-to-left minima.}
\maketitle

\section{Introduction}
 Using Chen's context-free grammar~\cite{Chen}, Ji~\cite{Ji} recently proved a formula for the  generating function of permutations by valleys, peaks, double ascents, double descents, left-to-right maxima and right-to-left maxima. The main objective of this paper is to prove a $q$-analog of Ji's formula by using Gessel's $q$-compositional formula~\cite{Ge}.
We need some further notations and
definitions before we can state our $q$-analog of Ji's formula.

Let $\S_{n}$ be the set of permutations of $[n]:=\{1, \ldots, n\}$. An index $i\in [n-1]$ is a {\em descent} (resp.,~{\em ascent}) of $\sigma=\sigma_1\cdots\sigma_n\in\S_n $ if $\sigma_i>\sigma_{i+1}$ (resp.,~$\sigma_i<\sigma_{i+1}$).
The {\em Eulerian polynomials} $A_n(x)$ can be described as the enumerator of $\S_n$ by descents or ascents, that is
\begin{align}\label{Eulerian}
A_n(x):=\sum_{\sigma \in \S_n}x^{\des(\sigma)}=\sum_{\sigma \in \S_n}x^{\asc(\sigma)},
\end{align}
where  $\des(\sigma)$ (resp.,~$\asc(\sigma)$) denotes the number of descents (resp.,~ascents) of $\sigma$. 
There is a well-known formula (see~\cite[p.~13]{Pet}) for the exponential generating function for $A_n(x)$:
 \begin{equation}\label{euler}
 1+\sum_{n\geq1} xA_n(x)\frac{t^n}{n!}=\frac{1-x}{1-xe^{(1-x)t}}. 
 \end{equation}
 %%%%%%%%
 A permutation statistic whose generating function equals~\eqref{Eulerian} is called {\em Eulerian}. Another classical Eulerian statistic (see~\cite[p.~23]{St})  is the number of excedances ``$\exc$'', where  
 $$\exc(\sigma):=\#\{i\in[n-1]: \sigma_i>i\}.$$

Another class of fundamental permutation statistics is the so-called Stirling statistics.  A {\em left-to-right maximum} (resp.,~{\em left-to-right minimum}) of $\sigma\in\S_n$ is an element $\sigma_i$ such that $\sigma_j<\sigma_i$ (resp.,~$\sigma_j>\sigma_i$) for every $j<i$.  Let $\lma(\sigma)$ and $\lmin(\sigma)$ denote the numbers of  left-to-right maxima and left-to-right minima  of $\sigma$, respectively.  
Similarly, we can define the  {\em right-to-left maximum} (resp., {\em right-to-left minimum}) of $\sigma$ and denote $\rma(\sigma)$ (resp., $\rmin(\sigma)$) the number of   right-to-left maxima (resp., right-to-left minima)  of $\sigma$. 
It is well known (see~\cite[p.~20]{St}) that the unsigned {\em Stirling numbers of the first kind} count permutations by their number of left-to-right maxima, namely
 $$
(x)_n:=x(x+1)(x+2)\cdots(x+n-1)=\sum_{\sigma \in \S_n}x^{\lma(\sigma)}. 
$$
Therefore, each statistic whose distribution over $\S_n$ gives $(x)_n$ is called {\em Stirling}. So  ``$\lmin$'', ``$\rma$'' and ``$\rmin$'' are all Stirling statistics by the symmetry of permutations and according to {\em Foata's first fundamental transformation} (see~\cite[Prop.~1.3.1]{St}), the statistic ``$\cyc$'' is also  Stirling, where $\cyc(\sigma)$ denotes the number of cycles of $\sigma$. 
 
 Carlitz and Scoville~\cite{CS} considered  the following polynomials involving Stirling-Eulerian Statistics, which we refer to as the {\em Stirling-Eulerian polynomials}\footnote{These polynomials are called the $(\alpha, \beta)$-Eulerian polynomials by Ji~\cite{Ji}.}:
\begin{align*}
A_n(x,y|\alpha,\beta)&=\sum_{\sigma\in\S_{n}}x^{\mathrm{asc}(\sigma)}y^{\mathrm{des}(\sigma)}\alpha^{\lma(\sigma)-1}\beta^{\rma(\sigma)-1}\\
&=\sum_{\sigma\in\S_{n}}x^{\des(\sigma)}y^{\asc(\sigma)}\alpha^{\lmin(\sigma)-1}\beta^{\rmin(\sigma)-1}.
\end{align*}
 Here the second interpretation follows from the complement 
 $$\sigma=\sigma_1\sigma_2\cdots\sigma_n\mapsto \sigma^c=(n+1-\sigma_1){(n+1-\sigma_2)}\cdots(n+1-\sigma_n)$$ 
 of permutations. In~\cite[Theorem~9]{CS}, they obtained the following generating function formula for $A_n(x,y|\alpha,\beta)$:
 \begin{equation}\label{eq:Carlitz}
 \sum_{n\geq0}A_{n+1}(x,y|\a,\b)\frac{t^n}{n!}=(1+xF(x,y;t))^{\a}(1+yF(x,y;t))^{\b},
 \end{equation} 
 where 
 \begin{equation}\label{def:Fxy}
 F(x,y;t):=\frac{e^{xt}-e^{yt}}{xe^{yt}-ye^{xt}}.
 \end{equation}
 
 Recently, Ji~\cite{Ji} refined the Carlitz--Scoville formula~\eqref{eq:Carlitz} by using the following refinements  of descents and ascents. 
 
%%%%%%%%%%%%
\begin{defi}
 For a permutation $\sigma\in\S_n$ with some specified boundary condition
% \footnote{We are concerned with four different  boundary conditions $\sigma_0=0,+\infty$ and $\sigma_{n+1}=0,+\infty$ in this paper.} 
 (i.e., specified values of $\sigma_0$ and $\sigma_{n+1}$), the index $i\in[n]$ is
 \begin{itemize}
 \item
 a {\bf\em peak} if $\sigma_{i-1}<\sigma_i>\sigma_{i+1}$;
 \item
 a {\bf\em valley} if $\sigma_{i-1}>\sigma_i<\sigma_{i+1}$;
 \item
 a {\bf\em double ascent} if $\sigma_{i-1}<\sigma_i<\sigma_{i+1}$;
 \item
 a {\bf\em double descent} if $\sigma_{i-1}>\sigma_i>\sigma_{i+1}$.
 \end{itemize}
 Let $\M(\sigma)$, $\V(\sigma)$, $\da(\sigma)$ and $\dd(\sigma)$ 
 (resp., $\widetilde\M(\sigma)$, $\widetilde\V(\sigma)$, $\tilde\da(\sigma)$ and $\tilde\dd(\sigma)$)
  denote respectively the number of  peaks, valleys, double ascents and double descents in $\sigma$ with boundary  condition $\sigma_0=\sigma_{n+1}=0$ (resp.,~$\sigma_0=\sigma_{n+1}=+\infty$). Note that $\M(\sigma)=\V(\sigma)+1$ and 
  $$
  \sum_{\sigma  \in \S_{n}}u_1^{\V(\sigma )}u_2^{\M(\sigma )}u_3^{\da(\sigma )}u_4^{\dd(\sigma )}=\sum_{\sigma  \in \S_{n}}u_1^{\widetilde\M(\sigma )}u_2^{\widetilde\V(\sigma )}u_3^{\tilde\dd(\sigma )}u_4^{\tilde\da(\sigma )}.
  $$
%  . Note that 
% \begin{equation}
% \M(\sigma)=\V(\sigma)+1\quad\text{and}\quad \widetilde\M(\sigma)=\widetilde\V(\sigma)-1.
% \end{equation}
 \end{defi}

%%%%%%%%%%%

Ji~\cite{Ji} introduced a refinement of $A_n(x,y|\alpha,\beta)$ as 
$$
P_n(u_1,u_2,u_3,u_4|\a,\b):=\sum_{\sigma  \in \S_n}u_1^{\V(\sigma )}u_2^{\M(\sigma )-1}u_3^{\da(\sigma )}u_4^{\dd(\sigma )}\alpha^{\lma(\sigma)-1}\beta^{\rma(\sigma)-1}
$$
and proved the following generalization of~\eqref{eq:Carlitz} using Chen's context-free grammar. 

\begin{thm}[Ji~\text{\cite[Thm.~1.4]{Ji}}]
\label{Jires}
We have
  \begin{equation*}\label{Jires-equation}
  \sum_{n\geq 0}P_{n+1}(u_1,u_2,u_3,u_4|\a,\b)\frac{t^n}{n!}=\left(1+yF(x,y;t)\right)^\frac{{\alpha}+{\beta}}{2}\left(1+xF(x,y;t)\right)^\frac{{\alpha}+{\beta}}{2}e^{\frac{1}{2}({\beta}-{\alpha})(u_4-u_3)t},
\end{equation*}
  where $x+y=u_3+u_4$ and $xy=u_1u_2$.
 \end{thm}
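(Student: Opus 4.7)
My plan is to prove Ji's formula via the $q=1$ case of Gessel's $q$-compositional formula (the classical exponential formula), applied to a double decomposition of each permutation $\sigma\in\S_{n+1}$: first by the position $p$ of the maximum entry $n+1$, yielding $\sigma=L\cdot(n+1)\cdot R$ with $L=\sigma_1\cdots\sigma_{p-1}$ and $R=\sigma_{p+1}\cdots\sigma_{n+1}$, and then by the left-to-right maxima of $L$ and the right-to-left maxima of $R$. As a guiding preliminary, using the closed forms $1+xF=(x-y)e^{xt}/(xe^{yt}-ye^{xt})$ and $1+yF=(x-y)e^{yt}/(xe^{yt}-ye^{xt})$ together with the identity $(\alpha+\beta)(x+y)/2+(\beta-\alpha)(u_4-u_3)/2=\alpha u_3+\beta u_4$ (which follows from $x+y=u_3+u_4$), the target right-hand side simplifies to
\[
\frac{(x-y)^{\alpha+\beta}\,e^{(\alpha u_3+\beta u_4)t}}{(xe^{yt}-ye^{xt})^{\alpha+\beta}},
\]
which specializes to the Carlitz--Scoville formula~\eqref{eq:Carlitz} at $u_3=x,\,u_4=y$. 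Thus Ji's formula refines Carlitz--Scoville, with $(u_3,u_4)$ independently tracking double ascents/descents while $(u_1,u_2)$ enters only via the product $u_1u_2=xy$.

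Under the decomposition, $\lma(\sigma)=\lma(L)+1$ and $\rma(\sigma)=1+\rma(R)$; the central position $p$ is always a peak, and each remaining peak, valley, double ascent, or double descent of $\sigma$ belongs either to $L$ (treated with effective right boundary $+\infty$) or to $R$ (treated with effective left boundary $+\infty$). Decomposing $L$ by its own lma into blocks (each starting with an lma of $L$) and $R$ by its rma analogously (each block ending with its max), the exponential formula factors the $L$-enumerator weighted by $\alpha^{\lma(L)}$ as $\exp(\alpha\Lambda(t))$ and the $R$-enumerator as $\exp(\beta\Lambda^*(t))$, where $\Lambda$ and $\Lambda^*$ are single-block EGFs. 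A direct inspection shows that a singleton block in $L$ (an lma of $L$ immediately followed by the next lma or by $n+1$) corresponds precisely to a double ascent of $\sigma$, contributing $u_3$, while a singleton in $R$ corresponds to a double descent of $\sigma$, contributing $u_4$. Hence $\Lambda=u_3t+\widetilde\Lambda$ and $\Lambda^*=u_4t+\widetilde\Lambda^*$, where $\widetilde\Lambda,\widetilde\Lambda^*$ enumerate multi-element blocks, and the singletons already supply the exponential factor $e^{(\alpha u_3+\beta u_4)t}$ of the simplified right-hand side.

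The main obstacle is then to establish
\[
\widetilde\Lambda(t)=\widetilde\Lambda^*(t)=\log\frac{x-y}{xe^{yt}-ye^{xt}},
\]
so that $\exp(\alpha\widetilde\Lambda+\beta\widetilde\Lambda^*)$ supplies the remaining factor $(x-y)^{\alpha+\beta}/(xe^{yt}-ye^{xt})^{\alpha+\beta}$. The equality $\widetilde\Lambda=\widetilde\Lambda^*$ follows from the reversal involution (which bijects $L$-blocks with $R$-blocks and swaps $u_3\leftrightarrow u_4$), reducing the problem to showing that $\widetilde\Lambda$ is symmetric in $u_3$ and $u_4$, and that it has the specific logarithmic closed form above (as a sanity check, the $t^2$ and $t^3$ coefficients compute directly to $xy$ and $xy(x+y)$, matching the expansion of the logarithm). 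The key technical step---a careful analysis of how the internal peaks, valleys, double ascents, and double descents of a multi-element block (starting with its maximum) translate into the corresponding statistics of $\sigma$, noting that the right endpoint of each block is adjacent to a strictly larger value (the next lma, or $n+1$ for the last block), which flips its block-internal ``peak/double descent'' status into a ``double ascent/valley'' of $\sigma$---should yield the claimed symmetry together with the explicit generating-function identification, thereby completing the proof via the exponential formula.
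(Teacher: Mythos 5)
Your approach is sound and is, in essence, the $q=1$ shadow of the proof this paper gives for the $q$-analog (Theorem~\ref{main}): the paper likewise splits $\sigma=\sigma_L(n+1)\sigma_R$ at the maximum, applies Gessel's compositional formula to the two halves with the multiplicative weights built from the boundary conditions $(0,+\infty)$ and $(+\infty,0)$, identifies the block generating function via the Pan--Zeng formula, and then recovers Theorem~\ref{Jires} by letting $q\to1$. Working directly at $q=1$ is legitimate and more elementary (the concluding remarks of the paper note that the classical compositional formula suffices here), and your bookkeeping is correct: $\lma(\sigma)=\lma(\sigma_L)+1$, $\rma(\sigma)=\rma(\sigma_R)+1$, singleton blocks of $\sigma_L$ (resp.\ $\sigma_R$) carry weight $u_3$ (resp.\ $u_4$) and produce the factor $e^{(\a u_3+\b u_4)t}$, and the right-hand side of Ji's formula does simplify to $(x-y)^{\a+\b}e^{(\a u_3+\b u_4)t}/(xe^{yt}-ye^{xt})^{\a+\b}$.

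The one genuine gap is the identification $\widetilde\Lambda(t)=\widetilde\Lambda^*(t)=\ln\bigl(\frac{x-y}{xe^{yt}-ye^{xt}}\bigr)$: verifying the $t^2$ and $t^3$ coefficients is not a proof, and the ``careful analysis of a multi-element block'' you defer is precisely the nontrivial content. To close it, observe that a block of length $m\geq2$ begins with its maximum, which in context is a peak (weight $u_2$), while its remaining $m-1$ letters carry the statistics with boundary $(+\infty,+\infty)$; hence $\frac{d}{dt}\widetilde\Lambda(t)=u_2\sum_{n\geq1}\sum_{\sigma\in\S_n}u_1^{\widetilde\V(\sigma)}u_2^{\widetilde\M(\sigma)}u_3^{\tilde\da(\sigma)}u_4^{\tilde\dd(\sigma)}\frac{t^n}{n!}$, which equals $u_1u_2F(x,y;t)=xyF(x,y;t)$ by the Carlitz--Scoville formula~\eqref{carl:2} (equivalently Theorem~\ref{thm:PZ} at $q=1$); integrating via~\eqref{parint} gives the claimed logarithm, and the symmetry in $u_3,u_4$ is automatic since $F(x,y;t)$ depends on $(u_3,u_4)$ only through $x+y=u_3+u_4$. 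So this step should be closed by quoting~\eqref{carl:2} rather than re-derived. Two smaller points: you must also record that the block weight $u_1^{\V_0}u_2^{\M_0}u_3^{\da_0}u_4^{\dd_0}\a^{\lma}$ is multiplicative over the basic decomposition (the paper's Lemma~\ref{omega0}), since that is the hypothesis licensing the exponential formula; and note that while every singleton block of $\sigma_L$ is a double ascent of $\sigma$, the converse fails (e.g.\ the letter $2$ inside the block $312$), so $e^{\a u_3t}$ accounts only for singleton blocks, the remaining double ascents being absorbed into $\widetilde\Lambda$.
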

%%%%%%%%%
When $\a=\b=1$, Theorem~\ref{Jires}  reduces to another  generating function formula of  Carlitz and Scoville~\cite{CS}, 
%$$
%  \sum_{n\geq 0}\sum_{\sigma  \in \S_{n+1}}u_1^{\V(\sigma )}u_2^{\M(\sigma )-1}u_3^{\da(\sigma )}u_4^{\dd(\sigma )}\frac{t^n}{n!}=\left(1+yF(x,y;t)\right)\left(1+xF(x,y;t)\right)
%$$
 which is equivalent to
\begin{equation}\label{carl:2}
\sum_{n\geq1}\sum_{\sigma  \in \S_{n}}u_1^{\V(\sigma )}u_2^{\M(\sigma )-1}u_3^{\da(\sigma )}u_4^{\dd(\sigma )}\frac{t^n}{n!}=F(x,y;t),\\
\end{equation}
where $x+y=u_3+u_4$ and $xy=u_1u_2$. A grammatical proof of~\eqref{carl:2} was provided by Fu~\cite{Fu}.

%%%%%%%%%%%%%%%%%%%
%%%%%%%%%%%%%%%%%%

Let us enter the world of $q$-series. For an integer $n\geq0$, let $[n]_q!:=\prod_{i=1}^n(1+q+\cdots+q^{i-1})$ be the $q$-analog of $n!$. Introduce a classical $q$-analog of the exponential function by
$$
\exp_q(t):=\sum_{n\geq0}\frac{t^n}{[n]_q!}.
$$
Stanley (see~\cite[Sec.~6.4]{Pet})  proved the following $q$-analog of~\eqref{euler} 
  \begin{equation}\label{stanley}
 1+\sum_{n\geq1} \sum_{\sigma\in\S_n}x^{\des(\sigma)+1}q^{\inv(\sigma)}\frac{t^n}{[n]_q!}=\frac{1-x}{1-x\exp_q(t(1-x))},
 \end{equation}
where $\inv(\sigma):=|\{(i,j)\in[n]^2: i<j, \sigma_i>\sigma_j\}|$ is the {\em number of inversions} of $\sigma$. Pan and Zeng~\cite{PZ}  proved (see also~\cite{Z17,DL} for alternative proofs) the following $q$-analog of~\eqref{carl:2}, which refines Stanley's formula~\eqref{stanley}.
\begin{thm}[Pan and Zeng]\label{thm:PZ}We have
\begin{equation*}
   \sum_{n\geq1}\sum_{\sigma  \in \S_{n}}u_1^{\widetilde\V(\sigma )}u_2^{\widetilde\M(\sigma )}u_3^{\tilde\da(\sigma )}u_4^{\tilde\dd(\sigma )}q^{\inv(\sigma )}\frac{t^{n}}{[n]_q!}=F(x,y,u_4,q;t),
\end{equation*}
where 
\begin{equation}\label{def:F}
F(x,y,u,q;t):=u_1\frac{\exp_q((x-u)t)-\exp_{q}((y-u)t)}{x\exp_{q}{\left((y-u)t\right)}-y\exp_{q}{\left((x-u)t\right)}},
\end{equation}
 $x+y=u_3+u_4$ and $xy=u_1u_2$. 
\end{thm}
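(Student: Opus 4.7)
My plan is to prove Theorem~\ref{thm:PZ} by deriving a $q$-differential equation satisfied by the left-hand side, viewed as a formal power series $\Phi(t)\in t\,\Q[[t]]$, and then verifying that $F(x,y,u_4,q;t)$ solves the same equation with initial condition $\Phi(0)=0$. Since the $q$-derivative $D_q\Phi(t)$ recursively determines the coefficient of $t^n/[n]_q!$ from the lower-order ones, uniqueness of the formal power series solution is automatic, so the identity will follow.

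To derive the $q$-ODE I decompose $\sigma\in\S_{n+1}$ at the position $k$ of its largest entry $n+1$: write $\sigma=\sigma^L\,(n+1)\,\sigma^R$, and let $\pi\in\S_a$, $\tau\in\S_b$ (with $a=k-1$ and $b=n+1-k$) be the order-isomorphic images of $\sigma^L$ and $\sigma^R$ on some $a$-subset $S\subseteq[n]$ and its complement. Since $n+1$ plays the role of $+\infty$ for its two immediate neighbors, under the $+\infty$-boundary convention the statistics at every position other than $k$ are inherited from $\pi$ or $\tau$, while position $k$ itself contributes the factor $u_4$ (a double descent when $a=0<b$), $u_3$ (a double ascent when $a>0=b$), $u_2$ (a peak when $a,b\geq 1$), or $u_1$ (a valley when $a=b=0$, i.e.\ $n=0$). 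The inversion statistic splits cleanly as $\inv(\sigma)=\inv(\pi)+\inv(\tau)+b+\mathrm{shuf}(S)$, where $\mathrm{shuf}(S)=|\{(s,t)\in S\times([n]\setminus S):s>t\}|$; the classical identity $\sum_{|S|=a}q^{\mathrm{shuf}(S)}=\binom{n}{a}_q$ sums out the shuffle contribution, and the leftover factor $q^b$ realises precisely the argument shift $t\mapsto qt$ in the second factor of the resulting $q$-exponential convolution. Setting $\Phi_0:=1$ as a bookkeeping convention and summing over $n$ then collapses everything to the clean $q$-ODE
\[
D_q\Phi(t)\;=\;u_1+u_3\,\Phi(t)+u_4\,\Phi(qt)+u_2\,\Phi(t)\,\Phi(qt).
\]

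To verify that $F(x,y,u_4,q;t)$ solves this equation, I write $F=u_1(P-Q)/(xQ-yP)$ with $P:=\exp_q((x-u_4)t)$ and $Q:=\exp_q((y-u_4)t)$, so that $D_qP=(x-u_4)P$ and $D_qQ=(y-u_4)Q$. Applying the $q$-Leibniz rule $D_q(fg)(t)=D_qf(t)\cdot g(t)+f(qt)\cdot D_qg(t)$ to the relation $F\cdot(xQ-yP)=u_1(P-Q)$, then simplifying using the algebraic identity $xP-yQ=(x+y)(P-Q)+(xQ-yP)$ and the constraints $x+y=u_3+u_4$, $xy=u_1u_2$, the computation reduces exactly to the $q$-ODE above. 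The main obstacle I foresee is not this final verification, which is routine $q$-calculus once everything is set up, but the careful combinatorial bookkeeping in the decomposition step: one must check that the inner boundary $n+1$ and the outer boundary $+\infty$ induce the same peak/valley/double-ascent/double-descent classification on the extremal positions of $\pi$ and $\tau$, and track precisely how $\inv(\sigma)$ partitions into the four contributions from $\pi$, $\tau$, the entry $n+1$, and the shuffle of value sets so that the $q^b$ factor aligns with the shift $t\mapsto qt$.
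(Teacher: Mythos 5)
This theorem is imported into the paper from Pan and Zeng with only a citation; the paper contains no proof of it, so there is nothing internal to compare against. Judged on its own, your argument is correct and self-contained. The decomposition $\sigma=\sigma^L(n+1)\sigma^R$ is compatible with the $+\infty$-boundary convention exactly as you claim (the neighbor $n+1$ classifies the adjacent positions of $\sigma^L,\sigma^R$ the same way the outer $+\infty$ does), the four boundary cases give the weights $u_1,u_4,u_3,u_2$ correctly, and the inversion count $\inv(\sigma)=\inv(\pi)+\inv(\tau)+b+\mathrm{shuf}(S)$ together with $\sum_{|S|=a}q^{\mathrm{shuf}(S)}={n\brack a}_q$ yields
\begin{equation*}
\delta_t\Phi(t)=u_1+u_3\,\Phi(t)+u_4\,\Phi(qt)+u_2\,\Phi(t)\Phi(qt),
\end{equation*}
which determines $\Phi$ uniquely subject to $\Phi(0)=0$. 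I verified the analytic half as well: writing $F\cdot(xQ-yP)=u_1(P-Q)$ with $P=\exp_q((x-u_4)t)$, $Q=\exp_q((y-u_4)t)$, one gets $\delta_t(xQ-yP)=-\,(xy/u_1)\cdot u_1(P-Q)-u_4(xQ-yP)$ and $u_1\bigl((x-u_4)P-(y-u_4)Q\bigr)=u_3\cdot u_1(P-Q)+u_1(xQ-yP)$ using $xP-yQ=(x+y)(P-Q)+(xQ-yP)$ and $x+y=u_3+u_4$, $xy=u_1u_2$; the $q$-Leibniz rule then reduces the relation exactly to the displayed $q$-ODE, as you assert. Your route (a quadratic $q$-difference equation obtained by splitting at the maximum, essentially the $q$-analog of the classical derivation of such generating functions, and in the spirit of Zhuang's and Dong--Lin's alternative proofs cited in the paper) is a perfectly legitimate way to establish the Pan--Zeng formula; the only thing I would ask you to make explicit in a final write-up is the case analysis confirming that the last position of $\sigma^L$ and the first position of $\sigma^R$ receive the same classification in $\sigma$ as in $\pi$ and $\tau$, since that is the one point where the "inheritance" of statistics could conceivably fail and it is currently only asserted.
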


%%%%%%%  
%%%%%%%%%%

Inspired by these work mentioned above, it is natural to ask for a $q$-analog of Theorem~\ref{Jires}. 
For this purpose, we introduce the $q$-analog of $P_n(u_1,u_2,u_3,u_4|\a,\b)$ by
$$
P_n(u_1,u_2,u_3,u_4|\a,\b;q):=\sum_{\sigma  \in \S_n}u_1^{\V(\sigma )}u_2^{\M(\sigma )-1}u_3^{\da(\sigma )}u_4^{\dd(\sigma )}\alpha^{\lma(\sigma)-1}\beta^{\rma(\sigma)-1}q^{\inv(\sigma)}. 
$$
Our main result is the following $q$-analog of Theorem~\ref{Jires}, which will be proved via  Gessel's $q$-compositional formula~\cite{Ge}. 
 %%%%%%%%%%%%
  \begin{thm}\label{main}
  Let $F(x,y,u,q;t)$ be defined in~\eqref{def:F}. Then
 \begin{equation}
\sum_{n\geq 0}P_{n+1}(u_1,u_2,u_3,u_4|\a,\b;q)\frac{t^{n}}{[n]_{q}!}=\prod_{k=0}^{\infty}G_k(x,y,u_1,u_2,u_3,u_4;t),
  \end{equation} 
  where $x+y=u_3+u_4$, $xy=u_1u_2$ and 
  \begin{align*}
   G_k(x,y,u_1,u_2,u_3,u_4;t)&:=\Big[1-t\a q^k(1-q)(u_3 +u_2 F(x,y,u_4,q;tq^{k+1}))\Big]^{-1}\\
   &\qquad\times\Big[1- \frac{t\b}{q^k}(q-1)(u_4 +u_2 F(x,y,u_3,\frac{1}{q};\frac{t}{q^{k}}))\Big]^{-1}.
 \end{align*}
  \end{thm}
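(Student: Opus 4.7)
The plan is to prove Theorem~\ref{main} by a two-level decomposition of permutations: first around the global maximum $n+1$, and then around the internal maxima of the two halves. The key tool is Gessel's $q$-compositional formula, which via the cross-inversion identity $\sum_{S\subseteq[n],\,|S|=j}q^{\mathrm{cross}(S)}=\binom{n}{j}_q$ turns ordered concatenations of permutations into products of $q$-EGFs; I will invoke this identity at each step to absorb cross-inversions into $q$-binomial factors.

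In Step~1 I will write every $\sigma\in\S_{n+1}$ as $\sigma=\sigma^L\,(n{+}1)\,\sigma^R$ with $|\sigma^L|=j$. Because $n+1$ is always a peak, the refined statistics split as $\M(\sigma)=\M(\sigma^L)+\M(\sigma^R)+1$, $\lma(\sigma)=\lma(\sigma^L)+1$, $\rma(\sigma)=\rma(\sigma^R)+1$ (and $\V,\da,\dd$ additively), provided one uses the mixed boundary $(0,+\infty)$ for $\sigma^L$ and $(+\infty,0)$ for $\sigma^R$; and $\inv(\sigma)=\inv(\sigma^L)+\inv(\sigma^R)+(n-j)+\mathrm{cross}$, where the summed cross-term produces a $\binom{n}{j}_q$ factor. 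Defining $\mathcal{L}(t),\mathcal{R}(t)$ to be the $q$-EGFs of $\sigma^L,\sigma^R$ with these weights and boundaries, the $q^{n-j}$ coming from $n+1$ can be absorbed into a $t\mapsto qt$ shift to yield
$$\sum_{n\geq 0}P_{n+1}(u_1,u_2,u_3,u_4|\alpha,\beta;q)\,\frac{t^n}{[n]_q!}=\mathcal{L}(t)\,\mathcal{R}(qt).$$

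In Step~2 I will further decompose each $\sigma^L$ around its own maximum $M$ as $\sigma^L=\tau\,M\,\mu$. The position of $M$ is a peak (weight $u_2$) when $\mu$ is nonempty and a double ascent (weight $u_3$) when $\mu$ is empty; the prefix $\tau$ inherits the boundary $(0,+\infty)$ of $\sigma^L$, while the suffix $\mu$ has boundary $(+\infty,+\infty)$, putting it in the Pan--Zeng regime. Theorem~\ref{thm:PZ} then replaces the $q$-EGF of $\mu$ by $F(x,y,u_4,q;t)$, and the cross-inversions between $\tau$ and $\mu$ collapse into a $q$-EGF product via another application of the Gessel identity. The outcome is the $q$-integral equation
$$\mathcal{L}(t)=1+\alpha\int_0^t\mathcal{L}(s)\bigl(u_3+u_2\,F(x,y,u_4,q;qs)\bigr)\,d_q s,$$
equivalent to $\mathcal{L}(t)/\mathcal{L}(qt)=\bigl[1-(1-q)\,t\alpha\,(u_3+u_2F(x,y,u_4,q;qt))\bigr]^{-1}$. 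Iterating $t\mapsto qt\mapsto q^2t\mapsto\cdots$ with $\mathcal{L}(0)=1$ produces exactly the first (i.e.\ $\alpha$-) factor of each $G_k$ in the theorem.

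In Step~3 I will exploit the word-reversal bijection $\sigma^R\mapsto\sigma^{R,\mathrm{rev}}$, which swaps $\lma\leftrightarrow\rma$ and $\da\leftrightarrow\dd$, preserves $\V,\M$ (while interchanging the two boundaries), and sends $\inv\mapsto\binom{m}{2}-\inv$. Combined with the identity $[m]_{1/q}!=[m]_q!/q^{\binom{m}{2}}$, this shows that $\mathcal{R}(t)$ equals $\mathcal{L}(t)$ after the substitutions $u_3\leftrightarrow u_4$, $\alpha\to\beta$, $q\to 1/q$. Applying those substitutions to the product formula obtained in Step~2, invoking the identity $F(x,y,u_4,q;t)=F(x,y,u_3,1/q;t)$ (which follows from $\exp_q(x)\exp_{1/q}(-x)=1$), and finally shifting $t\mapsto qt$ produces the second (i.e.\ $\beta$-) factor of each $G_k$, and multiplying yields $\prod_{k\geq 0}G_k$. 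The main obstacle will be the careful verification in Step~2 that the peak/valley/double-ascent/double-descent statistics and the cross-inversion counts decompose across the mixed boundaries exactly as required for the Pan--Zeng series $F(x,y,u_4,q;\cdot)$ to appear in the $q$-integral equation; the ensuing $q$-calculus iteration and the reversal computation are then routine bookkeeping.
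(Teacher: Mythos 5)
Your proposal is correct and follows essentially the same route as the paper: decompose at the global maximum $n+1$ into a left factor with boundary $(0,+\infty)$ weighted by $\alpha^{\lma}$ and a right factor handled by word reversal together with $q\to 1/q$ and $u_3\leftrightarrow u_4$, with Pan--Zeng's Theorem~\ref{thm:PZ} supplying the inner series, and $q$-binomials absorbing the cross-inversions. The only real difference is one of packaging: where the paper verifies that the weight is multiplicative and then invokes Gessel's $q$-exponential formula and his infinite-product expansion of $\exp_q[f]$ as black boxes, you re-derive both in this instance by peeling off the final basic block to obtain the $q$-difference equation $\mathcal{L}(t)=\mathcal{L}(qt)\big[1-(1-q)t\alpha\big(u_3+u_2F(x,y,u_4,q;qt)\big)\big]^{-1}$ and iterating, which yields the same product.
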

%%%%%%%%%%%
We will show that Theorem~\ref{main} reduces to Theorem~\ref{Jires} when $q\to 1$. As shown by Ji~\cite{Ji}, Theorem~\ref{Jires} has many nice applications, one of which is the $\gamma$-positivity of the Stirling-Eulerian polynomials $A_n(x,y|\a,\b)$ that we recall below. 

If $h(x,y)$ is symmetry and homogeneous of degree $n$, then it can be expanded as 
$$
h(x,y)=\sum_{k=0}^{\lfloor n/2\rfloor} \gamma_k(xy)^k{(x+y)}^{n-2k}.
$$
Moreover, if $\gamma_k\geq0$ for all $k$, then $h(x,y)$ is said to be {\em $\gamma$-positive}. One of the typical examples arising from
permutation statistics due to Foata and Sch\"uzenberger is the {\em bivariate Eulerian polynomials }$A_n(x,y):=\sum_{\sigma\in\S_n}x^{\asc(\sigma)}y^{\des(\sigma)}$; see the survey of Athanasiadis~\cite{Ath} for more information. 

 Using Theorem~\ref{Jires}, Ji proved the following refined $\gamma$-positivity expansion.
 \begin{thm}[\text{Ji~\cite[Theorem~1.9]{Ji}}]
 Let $\P_{n,k}$ be the set of $\sigma\in\S_n$ such that $\widetilde\M(\sigma)=k$. Then
\begin{equation}\label{g:ab}
A_n(x,y|\frac{\a+\b}{2},\frac{\a+\b}{2})=\sum_{k=0}^{\lfloor(n-1)/2\rfloor}\gamma_{n,k}(\a,\b)(xy)^k(x+y)^{n-2k-1},
\end{equation}
where 
\begin{equation*}
\gamma_{n,k}(\a,\b)=2^{2k+1-n}\sum_{\sigma\in\P_{n,k}}{\alpha}^{\lmin(\sigma)-1}\b^{\rmin(\sigma)-1}. 
\end{equation*}
\end{thm}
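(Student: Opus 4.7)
Starting from the second combinatorial description $A_n(x,y|\a,\b)=\sum_\sigma x^{\des(\sigma)}y^{\asc(\sigma)}\a^{\lmin(\sigma)-1}\b^{\rmin(\sigma)-1}$ recorded in the excerpt and the partition $\S_n=\bigsqcup_k\P_{n,k}$, the specialisation $\a=\b=(\a+\b)/2$ reduces the theorem to the family of identities
\begin{equation*}
\sum_{\sigma\in\P_{n,k}}x^{\des(\sigma)}y^{\asc(\sigma)}\Bigl(\tfrac{\a+\b}{2}\Bigr)^{\lmin(\sigma)+\rmin(\sigma)-2}=\gamma_{n,k}(\a,\b)(xy)^k(x+y)^{n-2k-1},\qquad k\ge 0.
\end{equation*}

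I would handle the $(x,y)$-part via the Foata--Strehl valley-hopping action on $\S_n$ taken with respect to the boundary $\sigma_0=\sigma_{n+1}=+\infty$; the preserved peak statistic is then $\widetilde\M$ and the action restricts to each $\P_{n,k}$. For each value $x\in[n]$, the involution $\varphi_x$ fixes $x$ when its slot is a peak or valley and otherwise hops $x$ to the opposite end of its maximal double-ascent/double-descent run, toggling a single ascent into a descent. Aggregating over all orbits of $\P_{n,k}$ gives, by the standard orbit-sum computation, the unweighted identity $\sum_{\sigma\in\P_{n,k}}x^{\des(\sigma)}y^{\asc(\sigma)}=c_{n,k}\,(xy)^k(x+y)^{n-2k-1}$ for a nonnegative constant $c_{n,k}$, recovering the classical $\gamma$-positivity of the bivariate Eulerian polynomial.

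To upgrade this to the full $\a,\b$-identity, the plan is to expand $((\a+\b)/2)^{\lmin+\rmin-2}=2^{-(\lmin+\rmin-2)}\sum_T\a^{|T|}\b^{\lmin+\rmin-2-|T|}$ and interpret the binomial expansion as a choice of $\a$/$\b$ decoration on the $(\lmin-1)+(\rmin-1)$ non-initial lmin and non-final rmin positions of $\sigma$. Pairing these decorations with the valley-hopping bijection should produce a weight-preserving correspondence between decorated permutations on the left-hand side and pairs (orbit representative, toggle pattern) on the right-hand side, delivering both the factor $2^{2k+1-n}$ and the sum $\sum_{\sigma\in\P_{n,k}}\a^{\lmin(\sigma)-1}\b^{\rmin(\sigma)-1}$ that defines $\gamma_{n,k}(\a,\b)$.

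The hard part will be constructing this second bijection. The valley-hopping action does not preserve $\lmin$ and $\rmin$ individually, nor even $\lmin+\rmin$ (for instance, $\varphi_3(3142)=1342$ moves $(\lmin,\rmin)$ from $(2,2)$ to $(1,2)$), so matching $\a,\b$-decorations across orbits requires a supplementary involution on $\P_{n,k}$ that swaps an lmin-contribution with an rmin-contribution while fixing $\widetilde\M$, $\des$, and $\asc$. Establishing this auxiliary involution, verifying that it produces the correct binomial distribution on $(\lmin-1)$ within each level set $\{\sigma\in\P_{n,k}:\lmin+\rmin=d+2\}$, and checking that it is compatible with the valley-hopping bijection will be the most delicate technical step of the proof.
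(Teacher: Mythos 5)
Your opening reduction is sound: writing the left-hand side as $\sum_{\sigma}x^{\des(\sigma)}y^{\asc(\sigma)}\bigl(\tfrac{\a+\b}{2}\bigr)^{\lmin(\sigma)+\rmin(\sigma)-2}$ and splitting $\S_n$ by $\widetilde\M(\sigma)=k$ is exactly the right refinement, and it matches how the paper organizes the combinatorial route to \eqref{g:ab}. The problem is that your proposal stops precisely where the proof has to begin: both bijections that would carry the argument are left unconstructed, and together they \emph{are} the content of the theorem. First, plain Foata--Strehl valley-hopping only yields the unweighted $\gamma$-expansion of $\sum_{\sigma\in\P_{n,k}}x^{\des(\sigma)}y^{\asc(\sigma)}$; as you yourself observe with the example $3142\mapsto 1342$, it does not preserve $\lmin+\rmin$, so it cannot be ``aggregated'' against the weight $\bigl(\tfrac{\a+\b}{2}\bigr)^{\lmin+\rmin-2}$. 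What is needed is a modified hopping action preserving $\widetilde\M$ and $\lmin+\rmin$ simultaneously; this is the content of the Ji--Lin proof of \eqref{g:a=b} cited in the paper, and it is a genuinely different group action, not a routine restriction of the classical one. Your plan silently assumes this step is available.

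Second, the ``supplementary involution on $\P_{n,k}$ that swaps an lmin-contribution with an rmin-contribution while fixing $\widetilde\M$,'' which you correctly flag as the delicate step, is exactly Theorem~\ref{pk:LR} (identity \eqref{2pnk}), and constructing it is the main combinatorial contribution of Section~\ref{Sec:4}. The construction does not come from valley-hopping at all: one takes the bi-basic decomposition of $\sigma$ into blocks headed by left-to-right minima (before the letter $1$) and tailed by right-to-left minima (after it), and for each such minimum $x\neq 1$ the involution $\psi_x$ reverses the block of $x$ and re-inserts it on the other side of $1$, turning $x$ from a left-to-right minimum into a right-to-left minimum or vice versa while preserving $\widetilde\M$ (Lemma~\ref{lem:psi}). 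The resulting $\Z_2^n$-action on marked permutations converts $(2\a)^{\lmin-1}(2\b)^{\rmin-1}$ into $(\a+\b)^{\lmin+\rmin-2}$ orbit by orbit, which combined with \eqref{g:a=b} gives \eqref{g:ab} after the substitution $\a\mapsto\a/2$, $\b\mapsto\b/2$. (The alternative, which is Ji's original proof, is to derive \eqref{g:ab} analytically from the generating function of Theorem~\ref{Jires}.) Until you actually produce these two actions or equivalents, what you have is a correct road map with its two bridges missing, not a proof.
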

On the other hand, Ji and Lin~\cite{JL} proved the $\a=\b$ case combinatorially via introducing a group action on permutations, namely, 
\begin{equation}\label{g:a=b}
A_n(x,y|\a,\a)=\sum_{k=0}^{\lfloor(n-1)/2\rfloor}\gamma_{n,k}(\a)(xy)^k(x+y)^{n-2k-1},
\end{equation}
where 
\begin{equation*}
\gamma_{n,k}(\a)=2^{2k+1-n}\sum_{\sigma\in\P_{n,k}}{\alpha}^{\lmin(\sigma)+\rmin(\sigma)-2}. 
\end{equation*}
Comparing~\eqref{g:a=b} with~\eqref{g:ab} we get the following identity. 

\begin{thm}\label{pk:LR}
For $n\geq1$, we have 
\begin{equation}\label{2pnk}
\sum_{\sigma\in\P_{n,k}}{(2\a)}^{\lmin(\sigma)-1}{(2\b)}^{\rmin(\sigma)-1}=\sum_{\sigma\in\P_{n,k}}{(\a+\b)}^{\lmin(\sigma)+\rmin(\sigma)-2}.
\end{equation}
\end{thm}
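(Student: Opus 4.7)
My plan is to derive Theorem~\ref{pk:LR} by matching coefficients in the two already-established $\gamma$-positive expansions~\eqref{g:ab} and~\eqref{g:a=b}, as is foreshadowed by the sentence preceding the theorem statement. The enabling algebraic observation is that the polynomials $(xy)^k(x+y)^{n-2k-1}$ for $0\le k\le\lfloor(n-1)/2\rfloor$ form a basis of the space of symmetric homogeneous polynomials of degree $n-1$ in $x,y$, so any two expansions of the same polynomial in this basis must agree coefficient by coefficient.

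The first step is to substitute $(\a,\b)\mapsto(2\a,2\b)$ in~\eqref{g:ab}. Its left-hand side becomes $A_n(x,y|\a+\b,\a+\b)$, and the coefficient of $(xy)^k(x+y)^{n-2k-1}$ on the right-hand side becomes
$$2^{2k+1-n}\sum_{\sigma\in\P_{n,k}}(2\a)^{\lmin(\sigma)-1}(2\b)^{\rmin(\sigma)-1}.$$
The second step is to substitute $\a\mapsto\a+\b$ in~\eqref{g:a=b}. Its left-hand side is again $A_n(x,y|\a+\b,\a+\b)$, while the coefficient of $(xy)^k(x+y)^{n-2k-1}$ becomes
$$2^{2k+1-n}\sum_{\sigma\in\P_{n,k}}(\a+\b)^{\lmin(\sigma)+\rmin(\sigma)-2}.$$
Equating these coefficients and canceling the common factor $2^{2k+1-n}$ yields exactly~\eqref{2pnk}.

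At this level the proof is essentially a one-line comparison with no real obstacle, since both $\gamma$-expansions are already in hand. A more satisfying but harder goal would be a direct bijective proof of~\eqref{2pnk}: one would seek an action on $\P_{n,k}$ which, on each fixed level set $\{\sigma:\lmin(\sigma)+\rmin(\sigma)=m+2\}$, partitions the permutations into orbits of size $2^m$ such that within each orbit the number of left-to-right minima (minus one) is binomially distributed on $\{0,1,\ldots,m\}$; this would establish the identity term by term. The main difficulty there would be to construct such an action while preserving the peak statistic $\widetilde\M$. A natural starting point is to refine the Ji--Lin group action from~\cite{JL} (which proves~\eqref{g:a=b} and is known to preserve $\lmin+\rmin$) by an auxiliary $(\Z/2\Z)^m$-action that toggles the type of each individual minimum between \emph{lmin} and \emph{rmin}.
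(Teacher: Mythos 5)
Your proof is correct: both substitutions are legitimate, both expansions represent the same polynomial $A_n(x,y|\a+\b,\a+\b)$, which is symmetric and homogeneous of degree $n-1$ in $x,y$, and the monomials $(xy)^k(x+y)^{n-1-2k}$ are linearly independent, so the coefficients must agree and \eqref{2pnk} follows. However, this is a genuinely different route from the paper's own proof. Your comparison argument is essentially what the paper itself does in the introduction to \emph{motivate} the theorem (``Comparing \eqref{g:a=b} with \eqref{g:ab} we get the following identity''), but the proof the paper actually supplies in Section~\ref{Sec:4} is combinatorial: it builds a new $\Z_2^n$-action from involutions $\psi_x$ defined via the ``bi-basic decomposition'' of a permutation around the letter $1$; each $\psi_x$ reverses a block and reinserts it on the other side of $1$, turning a left-to-right minimum into a right-to-left minimum (and vice versa) while preserving $\widetilde\M$, and the orbit sums then telescope to $(\a+\b)^{\lmin(\bar\sigma)+\rmin(\bar\sigma)-2}$. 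The trade-off is substantive: your argument is a one-line deduction but consumes \eqref{g:ab} (Ji's Theorem~1.9, which rests on the generating-function machinery of Theorem~\ref{Jires}) as an input, so it cannot deliver the paper's stated goal of a \emph{combinatorial} approach to \eqref{g:ab}; the paper's group action proves \eqref{2pnk} independently of \eqref{g:ab}, so that \eqref{g:ab} then follows combinatorially from the Ji--Lin proof of \eqref{g:a=b}. Your closing paragraph correctly identifies this harder goal and even sketches the right mechanism (toggling each minimum between the lmin and rmin types while preserving $\widetilde\M$); the bi-basic decomposition is precisely the device that makes that toggle well defined.
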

We will introduce a group action on permutations to prove Theorem~\ref{pk:LR} combinatorially, which provides a combinatorial approach to~\eqref{g:ab} in view of the group action proof of~\eqref{g:a=b} in~\cite{JL}.  

%%%%%%%%%
The rest of this paper is organized as follows. In Section~\ref{Sec:2}, we recall a $q$-analog of the exponential formula from~\cite{Ge} and use it to prove Theorem \ref{main}. In Section~\ref{Sec:3}, we show that Theorem~\ref{main} can be described as an integral formula which reduces to Theorem~\ref{Jires} when $q\rightarrow 1$. The combinatorial proof of Theorem~\ref{pk:LR} is proved in Section~\ref{Sec:4} via introducing a new group action on permutations. 

%%%%%%%%%%%%%%%%%%%%%%%%%%%%%%%%%%%%%%%%Gessel's
\section{Gessel's $q$-compositional formula and proof of Theorem~\ref{main}}
%%%%%%%%%%%%%%%%%%%%%%%%%%%%%%%%%%%%%%%%
%%%%%%%%%%%%%%%%%

 \label{Sec:2}
 %%%%%%%%%%%%%%%%%%%%%%%5
\subsection{Gessel's $q$-compositional formula}%%
 %%%%%%%%%%%%%%%%%%%%%%%%%%%%%%%%%%%%%%%%

 Throughout this paper, let $R$ be a ring of polynomials in some finite indeterminates  with rational coefficients. The {\em Eulerian differential operator} $\delta_t$ (see~\cite{GA})  is defined by 
$$
  \delta_t(f(t))=\frac{f(qt)-f(t)}{(q-1)t}
$$
  for any function $f(t)\in R[[t]]$  in the ring of formal power series in $t$ over $R$. Note that for $q=1$, $\delta_t$ reduces to the ordinary derivative. For example, we have 
  \begin{equation}\label{eq:det}
  \delta_t(t^n/[n]_q!)=t^{n-1}/[n-1]_q!\quad\text{for $n\geq1$}.
  \end{equation}
   For the sake of convenience, we shall often  write $f'$ for $\delta_t(f)$. 
 %%%%%%%%%
 
 Following Gessel~\cite{Ge}, we define the following $q$-analog of the map $f\mapsto f^k/k!$ for exponential generating functions. 
 \begin{defi}
Suppose that $f(0)=0$. Define $f^{[k]}$ by $f^{[0]}=1$ and for $k\geq1$,
\begin{equation}\label{Ge:1}
\delta_t(f^{[k]})=f'\cdot f^{[k-1]} \quad\text{with $f^{[k]}(0)=0$.}
\end{equation}
\end{defi}
If we write $f^{[k]}(t)=\sum_{n\geq0}f_n^{(k)}\frac{t^n}{[n]_q!}$ for $k\geq1$, then Eq.~\eqref{Ge:1} is equivalent to the recursion 
$$
f_{n+1}^{(k)}=\sum_{i=0}^n{n\brack i}_qf_{n-i+1}^{(1)} f_{i}^{(k-1)},
$$ 
 where ${n\brack i}_q:=\frac{[n]_q!}{[i]_q![n-i]_q!}$ are the {\em$q$-binomial coefficients}.  
%%%%%%%%
\begin{defi}
  Suppose that $f(0)=0$ and $g(t)=\sum_{n\geq 0}g_n\frac{t^n}{[n]_q!}$. Define the $q$-composition $g[f]$ by
  $$g[f]:=\sum_{n\geq 0}g_nf^{[n]}.$$
  \end{defi}
%%%%%%%%
Note that $t^{[k]}=t^k/[k]_q!$ and so $g[t]=g(t)$.
 %%%%%%%%
Gessel~\cite{Ge} proved the following $q$-compositional formula, which express $\exp_q[f]$ as an infinite product.
 %%%%%%
 \begin{thm}[Gessel~\cite{Ge}]\label{Pro-Ge}
  Suppose that $f(0)=0$. Then 
  \begin{equation}\label{ef}
  \exp_q[f]=\prod_{k=0}^{\infty}\big(1-tq^k(1-q)f'(q^kt)\big)^{-1}.
  \end{equation} 
  \end{thm}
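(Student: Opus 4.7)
The plan is to characterize both sides of~\eqref{ef} as the unique solution, in an appropriate ring of formal power series, of the $q$-differential equation
\[
\delta_t y = f'(t) \cdot y, \qquad y(0) = 1.
\]
First, I would verify that $\exp_q[f]$ solves this equation. Since $\exp_q(u) = \sum_{n \geq 0} u^n/[n]_q!$ has all coefficients equal to $1$, the definition of the $q$-composition gives $\exp_q[f] = \sum_{n \geq 0} f^{[n]}$, and the defining recursion $\delta_t f^{[n]} = f' \cdot f^{[n-1]}$ together with $f^{[n]}(0) = 0$ for $n \geq 1$ yields, term by term,
\[
\delta_t \exp_q[f] = \sum_{n \geq 1} f' \cdot f^{[n-1]} = f' \cdot \exp_q[f], \qquad \exp_q[f](0) = f^{[0]} = 1.
\]

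Next, for the product side, set $g_k(t) := 1 - tq^k(1-q)f'(q^k t)$ and $H(t) := \prod_{k \geq 0} g_k(t)^{-1}$. The key observation is the shift identity
\[
g_k(qt) = 1 - qt\,q^k(1-q)f'(q^{k+1}t) = g_{k+1}(t),
\]
which, applied factor by factor, re-indexes the product:
\[
H(qt) = \prod_{k \geq 0} g_{k+1}(t)^{-1} = g_0(t)\, H(t).
\]
Rearranging yields $H(qt) - H(t) = -(1-q)t\, f'(t)\, H(t)$, and dividing by $(q-1)t$ produces $\delta_t H = f' \cdot H$, with $H(0) = 1$ since each $g_k(0) = 1$.

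Uniqueness is a routine coefficient comparison: writing $y = \sum_{n \geq 0} a_n t^n/[n]_q!$ and $f' = \sum_{n \geq 0} b_n t^n/[n]_q!$, combining~\eqref{eq:det} with the standard $q$-Cauchy product for $f' y$ gives the recursion
\[
a_{n+1} = \sum_{k=0}^n {n \brack k}_q b_k\, a_{n-k},
\]
which together with $a_0 = 1$ determines the $a_n$ uniquely. Hence $\exp_q[f] = H(t)$.

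The main subtlety is foundational rather than computational: since $f'(0)$ need not vanish, the factors $g_k(t)^{-1}$ do not tend to $1$ in the $t$-adic topology alone, so the product $H(t)$ must be interpreted in a completion where $q^k \to 0$ (for instance by regarding $q$ as a formal parameter so that the $q$-adic topology applies). Once this formal setting is fixed, the shift identity $g_k(qt) = g_{k+1}(t)$ passes cleanly to the limit of partial products and the rest of the argument is transparent.
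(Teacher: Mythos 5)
Your argument is correct. Note that the paper itself gives no proof of this statement: it is quoted verbatim from Gessel's paper \cite{Ge}, so there is nothing internal to compare against. Your route --- showing that both $\exp_q[f]$ and the infinite product satisfy the $q$-difference equation $\delta_t y = f'\cdot y$ with $y(0)=1$, via the shift identity $g_k(qt)=g_{k+1}(t)$ and hence $H(qt)=g_0(t)H(t)$, and then invoking uniqueness by the coefficient recursion $a_{n+1}=\sum_k {n\brack k}_q b_k a_{n-k}$ --- is in fact essentially Gessel's original argument, and every step checks out. You are also right to flag the convergence of the infinite product as the one genuinely delicate point: since $f'(0)$ need not vanish, the coefficient of $t$ in $H$ is the sum $\sum_{k\ge 0}q^k(1-q)f'(0)$, which only makes sense in a coefficient ring where $\sum_k q^k$ converges (e.g.\ $q$-adically or as a rational function of $q$); once that is fixed, passing the shift identity through the limit of partial products is legitimate, as you say.
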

  %%%%%
 
 A permutation is {\em basic} if it begins with its greatest element.  Denote by $\B_n$ the set of all basic permutations in $\S_n$.
  For a permutation $\sigma$ with $\lma(\sigma)=k$, there exists a unique decomposition of $\sigma$ into basic components as
\begin{equation}\label{Ge:fac}
\sigma=\b_1\b_2\cdots\b_k,
\end{equation}
where 
\begin{itemize}
\item  each $\b_i$  is basic;
\item and the first letter of each $\b_i$ is a left-to-right maximum  of $\sigma$.
\end{itemize}
Such a decomposition of $\sigma$ is called the {\em basic decomposition} of $\sigma$ in~\cite{Ge}.
 For example, $\sigma=2164573$ has the  basic decomposition $\b_1=21$, $\b_2=645$ and $\b_3=73$.
 %%%%%%%%
 
 Given any permutation $\sigma$ with $n$ distinct letters, let $\red(\sigma)$ be the permutation in $\S_n$ obtained from $\sigma$ by replacing  its $i$-th smallest element by $i$ for $1\leq i\leq n$. For example, $\red(8425)=4213$. A function $\omega$ from the set of all permutations to a commutative algebra over the rationals is called {\em multiplicative} if 
for all permutations $\sigma$:
 \begin{enumerate}
  \item[(i)] $\omega(\sigma)=\omega(\red(\sigma))$;
  \item[(ii)] If $\beta_1\beta_2\cdots\beta_k$ is the basic decomposition of $\sigma$, then
  $\omega(\sigma)=\omega(\beta_1)\omega(\beta_2)\cdots\omega(\beta_k).$
 \end{enumerate}
For instance, the function $\omega(\sigma)=x^{\lma(\sigma)}$ is multiplicative. 
 %%%%%%%
 \begin{thm}[Gessel~\cite{Ge}]\label{Ges}
  Let $\omega$ be a multiplicative function. Suppose $g_n=\sum\limits_{\sigma\in \S_n}\omega(\sigma)q^{\inv(\sigma)}$ and
   $f_n=\sum\limits_{\beta\in \B_n}\omega(\beta)q^{\inv(\beta)}.$ Then
  \begin{equation}\label{gf}
    \sum_{n\geq 0}g_n\frac{t^n}{[n]_q!}=\exp_q\bigg[  \sum_{n\geq 1}f_n\frac{t^n}{[n]_q!}  \bigg].
  \end{equation}
  \end{thm}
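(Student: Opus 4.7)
The plan is to refine the left-hand side of~\eqref{gf} according to $k=\lma(\sigma)$ via the basic decomposition, and then to match the resulting pieces against $f^{[k]}$ through Gessel's defining recursion~\eqref{Ge:1}. Set
\[
g_n^{(k)}:=\sum_{\substack{\sigma\in\S_n\\ \lma(\sigma)=k}}\omega(\sigma)\,q^{\inv(\sigma)},\qquad G^{(k)}(t):=\sum_{n\geq0}g_n^{(k)}\frac{t^n}{[n]_q!},
\]
so that $\sum_{n}g_n t^n/[n]_q!=\sum_k G^{(k)}(t)$. Writing $\tilde f(t):=\sum_{n\geq 1}f_n t^n/[n]_q!$, my goal will be to prove $G^{(k)}=\tilde f^{[k]}$ by induction on $k$.

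Given $\sigma\in\S_n$ with basic decomposition $\sigma=\beta_1\cdots\beta_k$ and $S_i$ the underlying set of letters of $\beta_i$, two structural facts will drive the argument: (a) one has $\max S_1<\cdots<\max S_k$ and $\max S_k=n$, since the position of $n$ in $\sigma$ is automatically a left-to-right maximum and no further ones can occur afterward; (b) the inversion count splits cleanly as
\[
\inv(\sigma)=\sum_{i=1}^{k}\inv(\beta_i)+\mathrm{CI}(S_1,\ldots,S_k),
\]
where $\mathrm{CI}$ denotes the cross-inversions, namely pairs $(a,b)$ with $a\in S_i$, $b\in S_j$, $i<j$, $a>b$. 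Together with $\inv(\beta_i)=\inv(\red(\beta_i))$ and the multiplicativity of $\omega$, this gives that the weighted contribution of any set partition $\{S_1,\ldots,S_k\}$ (whose block ordering is forced by the increasing-max condition) equals $q^{\mathrm{CI}(S_\bullet)}\prod_i f_{|S_i|}$.

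I will then peel off the last block $S_k$, which must contain $n$; writing $S_k\setminus\{n\}=T\subseteq [n-1]$ with $|T|=s$, the standard $q$-binomial identity
\[
\sum_{\substack{T\subseteq[n-1]\\|T|=s}}q^{|\{(a,b)\,:\,a\in[n-1]\setminus T,\,b\in T,\,a>b\}|}={n-1\brack s}_q
\]
collapses the sum over $T$ into the recursion
\[
g_n^{(k)}=\sum_{s\geq0}{n-1\brack s}_q f_{s+1}\,g_{n-1-s}^{(k-1)}.
\]
Applying the $q$-exponential product rule $\frac{t^i}{[i]_q!}\cdot\frac{t^j}{[j]_q!}={i+j\brack i}_q\frac{t^{i+j}}{[i+j]_q!}$ translates this into $\delta_t G^{(k)}=\tilde f'\cdot G^{(k-1)}$; combined with $G^{(0)}=1$ and $G^{(k)}(0)=0$ for $k\geq 1$, this is exactly Gessel's defining recursion~\eqref{Ge:1} for $\tilde f^{[k]}$, so induction on $k$ forces $G^{(k)}=\tilde f^{[k]}$, and summing over $k$ yields~\eqref{gf}. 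The most delicate step is the $q$-binomial collapse above, which I would verify via the standard statistic-preserving bijection between $s$-subsets of $[n-1]$ (encoded as $0/1$-words) and their inversion tables.
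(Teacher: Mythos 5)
The paper does not actually prove this statement: it is imported verbatim from Gessel's paper \cite{Ge} as a known result, so there is no internal proof to compare against. Your argument is correct and is essentially the standard (Gessel's original) proof of the $q$-exponential formula. The two load-bearing points both check out: the basic decomposition gives a bijection between permutations with $\lma(\sigma)=k$ and ordered set partitions $(S_1,\dots,S_k)$ of $[n]$ with $\max S_1<\cdots<\max S_k$ equipped with a basic arrangement of each block, and under this bijection $\inv$ splits into within-block inversions plus cross-inversions while $\omega$ factors over the blocks by multiplicativity. Peeling off the block containing $n$ and summing $q^{\mathrm{cross}(T)}$ over $s$-subsets $T\subseteq[n-1]$ gives ${n-1\brack s}_q$, yielding $g_n^{(k)}=\sum_s{n-1\brack s}_q f_{s+1}\,g_{n-1-s}^{(k-1)}$, which after the substitution $i=n-1-s$ is exactly the recursion $f_{n}^{(k)}=\sum_i{n-1\brack i}_q f_{n-i}^{(1)}f_i^{(k-1)}$ that the paper records as equivalent to $\delta_t(f^{[k]})=f'\cdot f^{[k-1]}$; with the initial conditions $G^{(0)}=1$ and $G^{(k)}(0)=0$ this forces $G^{(k)}=\tilde f^{[k]}$ and hence \eqref{gf}. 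The one step worth writing out fully in a final version is the factorization of the triple sum (over $T$, over the basic arrangement of $T\cup\{n\}$, and over the decorated partitions of $[n-1]\setminus T$), which works because the latter two factors depend on $T$ only through $|T|$ after standardization.
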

 %%%%%%%%

 %%%%%%%%%%%%

\subsection{Proof of Theorem~\ref{main}}
\begin{defi}
For $\sigma\in\S_n$, let $\M_0(\sigma)$, $\V_0(\sigma)$, $\da_0(\sigma)$ and $\dd_0(\sigma)$ 
 (resp., $\M_{\infty}(\sigma)$, $\V_{\infty}(\sigma)$, $\da_{\infty}(\sigma)$ and $\dd_{\infty}(\sigma)$)
 denote respectively  the numbers of  peaks, valleys, double ascents and double descents in $\sigma$ with boundary  conditions $\sigma_0=0$ and $\sigma_{n+1}=+\infty$ (resp.,~$\sigma_0=+\infty$ and $\sigma_{n+1}=0$). Introduce two weighted  functions by 
 $$
 \omega_0(\sigma)=u_1^{\V_0(\sigma )}u_2^{\M_0(\sigma )}u_3^{\da_0(\sigma )}u_4^{\dd_0(\sigma )}\alpha^{\lma(\sigma)}
 $$
 and 
 $$
 \omega_{\infty}(\sigma)=u_1^{\V_{\infty}(\sigma )}u_2^{\M_{\infty}(\sigma )}u_3^{\da_{\infty}(\sigma )}u_4^{\dd_{\infty}(\sigma )}\b^{\rma(\sigma)}.
 $$
  \end{defi}
  The following observation can be checked routinely. 
  \begin{lem}\label{omega0}
  The function $\omega_0$ is multiplicative. 
  \end{lem}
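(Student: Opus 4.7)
The plan is to verify the two defining properties of multiplicativity for $\omega_0$ directly, treating property (i) as an immediate observation and then focusing attention on the splitting of the pattern statistics across the basic decomposition.

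For property (i), each of the statistics $\V_0,\M_0,\da_0,\dd_0$, and $\lma$ depends only on the pattern of relative inequalities among the letters of $\sigma$, together with the virtual boundary values $\sigma_0=0$ and $\sigma_{n+1}=+\infty$ which serve as $\pm\infty$-style sentinels. Since these inequalities are preserved by the reduction map $\red$, the equality $\omega_0(\sigma)=\omega_0(\red(\sigma))$ is immediate.

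For property (ii), write $\sigma=\beta_1\beta_2\cdots\beta_k$ for the basic decomposition, let $M_i$ denote the first (and largest) letter of $\beta_i$, and let $L_i$ denote its last letter. The key observation is that the choices $0$ and $+\infty$ for the boundary values are designed to synchronize precisely with the cut points of the basic decomposition. When $\beta_i$ is read as a substring of $\sigma$, its left neighbor is either $0$ (when $i=1$) or $L_{i-1}$, and $L_{i-1}<M_i$ by the left-to-right maximum property; its right neighbor is either $+\infty$ (when $i=k$) or $M_{i+1}$, and $M_{i+1}$ exceeds every letter of $\beta_i$ for the same reason. When $\beta_i$ is read standalone with boundary $0,+\infty$, the same two inequalities $(\text{left neighbor})<M_i$ and $L_i<(\text{right neighbor})$ hold. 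Consequently, the peak/valley/double-ascent/double-descent classification at every position inside $\beta_i$ agrees in the two viewpoints, so $\V_0,\M_0,\da_0,\dd_0$ are additive over the basic decomposition. Combined with the trivial identity $\lma(\sigma)=k=\sum_i\lma(\beta_i)$ — each $\beta_i$ begins with its maximum and so contributes exactly one left-to-right maximum — one obtains the multiplicative factorization $\omega_0(\sigma)=\prod_i\omega_0(\beta_i)$.

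I do not anticipate any substantive obstacle; this is essentially a bookkeeping check. The only place needing a little care is the degenerate case of a length-one component $\beta_i=(M_i)$, where the single position must be classified identically in both readings as a double ascent; this follows from $0<M_i<+\infty$ standalone and $L_{i-1}<M_i<M_{i+1}$ embedded, with the boundary cases $i=1$ and $i=k$ only simplifying the relevant inequalities.
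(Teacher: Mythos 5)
Your verification is correct and is exactly the ``routine check'' that the paper leaves to the reader: the boundary values $0$ and $+\infty$ match the inequalities $L_{i-1}<M_i$ and $L_i<M_{i+1}$ forced by the left-to-right-maximum structure of the basic decomposition, so the four pattern statistics are additive over blocks and $\lma$ contributes one per block. Nothing further is needed.
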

  
  We are now in position to prove Theorem~\ref{main}.
  \begin{proof}[{\bf Proof of Theorem~\ref{main}}] Throughout the proof, we assume that $x+y=u_3+u_4$ and $xy=u_1u_2$. 
  Let 
$$
L_n(u_3,u_4,\a,q):=\sum_{\sigma\in\S_n}\omega_0(\sigma)q^{\inv{(\sigma})}\quad\text{and}\quad B_n(\a,q):=\sum_{\sigma\in\B_n}\omega_0(\sigma)q^{\inv{(\sigma})}.
$$
It follows from Lemma~\ref{omega0} and Theorem~\ref{Ges} that 
\begin{equation}\label{eq:omega}
  \sum_{n\geq 0}L_n(u_3,u_4,\alpha,q)\frac{t^n}{[n]_q!}=\exp_q\bigg[\sum_{n\geq 1}B_n(\alpha,q)\frac{t^n}{[n]_q!}\bigg].
\end{equation}
%Let 
%$
%\tilde\omega(\sigma):=u_1^{\widetilde\V(\sigma)}u_2^{\widetilde\M(\sigma)} u_3^{\tilde\da(\sigma)}u_4^{\tilde\dd(\sigma)}q^{\inv(\sigma)}.
%$
As $B_1(\a,q)=u_3\alpha$ and for $n\geq2$,
$$
B_n(\alpha,q)=u_2\alpha q^{n-1}\sum_{\sigma\in\S_{n-1}}u_1^{\widetilde\V(\sigma)}u_2^{\widetilde\M(\sigma)} u_3^{\tilde\da(\sigma)}u_4^{\tilde\dd(\sigma)}q^{\inv(\sigma)},
$$
by~\eqref{eq:det} we have
\begin{align*}
\delta_t\biggl(\sum_{n\geq 1}B_n(\alpha,q)\frac{t^n}{[n]_q!}\biggr)&=
\sum_{n\geq 1}B_n(\alpha,q)\frac{t^{n-1}}{[n-1]_q!}\\
&=u_3\alpha +u_2\alpha\sum_{n\geq 1} \sum_{\sigma\in\S_{n}}u_1^{\widetilde\V(\sigma)}u_2^{\widetilde\M(\sigma)} u_3^{\tilde\da(\sigma)}u_4^{\tilde\dd(\sigma)}q^{\inv(\sigma)}\frac{(tq)^{n}}{[n]_q!}\\
&=u_3\alpha+u_2\alpha F(x,y,u_4,q;tq)
\end{align*}
in view of Theorem~\ref{thm:PZ}. It then follows from Theorem~\ref{Pro-Ge} and Eq.~\eqref{eq:omega} that 
\begin{equation}\label{Ln}
  \sum_{n\geq 0}L_n(u_3,u_4,\alpha,q)\frac{t^n}{[n]_q!}=\prod_{k\geq 0}\Big[1-t\a q^k(1-q)(u_3 +u_2 F(x,y,u_4,q;tq^{k+1}))\Big]^{-1}.
\end{equation}

%%%%%%%%%
On the other hand, we need to compute the exponential generating function for 
$$
  R_n(u_3,u_4,\beta,q):=\sum_{\sigma\in\S_n}\omega_{\infty}(\sigma)q^{\inv{(\sigma})}.
 $$
For $\sigma\in\S_n$,  let $\sigma^r:=\sigma_n\sigma_{n-1}\cdots\sigma_1$ be the {\em reversal} of $\sigma$.  As 
$$\V_{\infty}(\sigma^r)=\V_0(\sigma ),~\M_{\infty}(\sigma^r)=\M(\sigma),~\da_{\infty}(\sigma^r)=\dd_0(\sigma),~\dd_{\infty}(\sigma^r)=\da_0(\sigma )
$$
and
$$\rma(\sigma^r)=\lma(\sigma),\quad\inv(\sigma^r)=\binom{n}{2}-\inv{(\sigma}),
$$ we have
\begin{align*}
\sum_{n\geq 0} R_n(u_3,u_4,\beta,q)\frac{t^n}{[n]_q!}&=\sum_{n\geq 0}q^{\binom{n}{2}}L_n(u_4,u_3,\beta,q^{-1})\frac{t^n}{[n]_q!}\\
&=\sum_{n\geq 0}L_n(u_4,u_3,\beta,q^{-1})\frac{t^n}{[n]_{q^{-1}}!}.
\end{align*}
It then follows from~\eqref{Ln}  that
\begin{equation}\label{Rn}
  \sum_{n\geq 0}R_n(u_3,u_4,\b,q)\frac{t^n}{[n]_q!}=\prod_{k\geq 0}\Big[1- \frac{t\b}{q^k}(1-\frac{1}{q})(u_4 +u_2 F(x,y,u_3,\frac{1}{q};\frac{t}{q^{k+1}}))\Big]^{-1}.
\end{equation}

It is known~\cite[Prop~1.3.17]{St} that   ${n\brack k}_q$
has the following interpretation
$$
{n\brack  k}_{q}=\sum_{({\mathcal A}, {\mathcal B})}q^{\inv({\mathcal A}, {\mathcal B})},
$$
 summed  over all ordered partitions $({\mathcal A}, {\mathcal B})$ of $[n]$ such that $|{\mathcal A}|=k$. Let $$\S_{n+1,k+1}:=\{\sigma\in\S_{n+1}:\sigma_{k+1}=n+1\}.$$
As  any permutation $\sigma\in\S_{n+1,k+1}$  can be written as $\sigma=\sigma_L(n+1)\sigma_R$ with $\sigma_L=\sigma_1\cdots\sigma_k$ and $\sigma_R=\sigma_{k+2}\cdots\sigma_{n+1}$,  we have 
\begin{align*}
&\quad\sum_{\sigma\in\S_{n+1,k+1}}u_1^{\V(\sigma )}u_2^{\M(\sigma )-1}u_3^{\da(\sigma )}u_4^{\dd(\sigma )}\alpha^{\lma(\sigma)-1}\beta^{\rma(\sigma)-1}q^{\inv(\sigma )}\\
&={n\brack k}_q q^{n-k}L_k(u_3,u_4,\alpha,q)R_{n-k}(u_3,u_4,\b,q)
\end{align*}
in view of  the above interpretation of ${n\brack k}_q$. Thus, 
$$
P_{n+1}(u_1,u_2,u_3,u_4|\a,\b;q)=\sum_{k=0}^n{n\brack k}_q q^{n-k}L_k(u_3,u_4,\alpha,q)R_{n-k}(u_3,u_4,\b,q),
$$
which in terms of exponential generating function gives
$$
\sum_{n\geq0}P_{n+1}(u_1,u_2,u_3,u_4|\a,\b;q)\frac{t^n}{[n]_q!}=\bigg(\sum_{n\geq0}L_n(u_3,u_4,\a,q)\frac{t^n}{[n]_q!}\bigg)\bigg(\sum_{n\geq0}R_n(u_3,u_4,\b,q)\frac{(tq)^n}{[n]_q!}\bigg).
$$
Substituting~\eqref{Ln} and~\eqref{Rn} into the above identity proves  Theorem \ref{main}.
\end{proof}
%%%%%%%%%%%%%%

%%%%%%%%%%%%%%%%%%%%%%%%%%%%%
%%%%%%%%%%%%%%%%%%%%%%%%%%%%%%
\section{A new proof of Theorem~\ref{Jires}}
%%%%%%%%%%
\label{Sec:3}

 For any $f(t)\in R[[t]]$,  define the {\em$q$-integral} (see~\cite[Section~1.11]{GR}) of $f$ by
\begin{equation}\label{int}
  \int_{0}^{t}f(x)d_qx:=t(1-q)\sum_{n\geq 0}f(q^n t)q^n.
\end{equation}
As $ \lim\limits_{q\rightarrow 1}\int_{0}^{t}x^kd_qx=t^{k+1}/(k+1)$ for $k\geq0$, we have
\begin{equation}\label{qtonq}
 \lim_{q\rightarrow 1} \int_{0}^{t}f(x)d_qx=\int_{0}^{t}f(x)dx.
\end{equation}

%%%%%%%
Our main result Theorem~\ref{main} can also  be  described as an integral formula.
%%%%%%%
\begin{thm}\label{main2}
 We have
  \begin{align*}
 &\quad\sum_{n\geq 0}P_{n+1}(u_1,u_2,u_3,u_4|\a,\b;q)\frac{t^{n}}{[n]_{q}!}\\
  &=\exp\Big(\int_{0}^{t}\frac{\ln\big[1-(1-q)z\a\big(u_3 +u_2 F(zq)\big)\big]^{-1}}{(1-q)z} d_q z 
     +\int_{0}^{t}\frac{\ln\big[1-(q-1)z\b\big(u_4 +u_2 \tilde F(z)\big)\big]^{-1}}{(1-\frac{1}{q})z} d_{\frac{1}{q}} z\Big),
  \end{align*} 
  with $x+y=u_3+u_4$,~$xy=u_1u_2$,
  \begin{equation}\label{Def:FtF}
F(t):=F(x,y,u_4,q;t)\quad\text{and}\quad\tilde F(t):=F(x,y,u_3,\frac{1}{q};t).
  \end{equation}
\end{thm}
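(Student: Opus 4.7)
\medskip

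\noindent\textbf{Proof proposal for Theorem~\ref{main2}.} The plan is to deduce Theorem~\ref{main2} directly from Theorem~\ref{main} by taking logarithms of the infinite product and recognising each of the resulting sums as a $q$-integral (respectively a $q^{-1}$-integral). No new combinatorics is needed; the work is purely analytic and consists in verifying that the two integral expressions on the right-hand side of Theorem~\ref{main2} reproduce, after the substitution $z=tq^k$ (resp.\ $z=tq^{-k}$), the two products appearing in $G_k$.

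First, I would take the logarithm of the identity of Theorem~\ref{main} and rewrite the right-hand side as
\[
\prod_{k\geq 0}G_k \;=\; \exp\!\Big(\sum_{k\geq 0}\ln G_k^{(1)} \;+\; \sum_{k\geq 0}\ln G_k^{(2)}\Big),
\]
where $G_k^{(1)}$ and $G_k^{(2)}$ denote, respectively, the two bracketed factors in the definition of $G_k$. So it suffices to identify each of the two sums with the corresponding $q$-integral.

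For the first sum, I apply the definition~\eqref{int} with $f(z)=\ln\bigl[1-(1-q)z\alpha\bigl(u_3+u_2F(zq)\bigr)\bigr]^{-1}\!/\!\bigl[(1-q)z\bigr]$. Setting $z=tq^{k}$ inside~\eqref{int} gives
\[
\int_{0}^{t}\frac{\ln\bigl[1-(1-q)z\alpha\bigl(u_3+u_2F(zq)\bigr)\bigr]^{-1}}{(1-q)z}\,d_qz
\;=\;\sum_{k\geq 0}\ln\!\bigl[1-(1-q)t\alpha q^{k}\bigl(u_3+u_2F(tq^{k+1})\bigr)\bigr]^{-1},
\]
the factors $t(1-q)q^k$ coming from the $q$-integral cancelling the $(1-q)tq^k$ in the denominator of the integrand. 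The right-hand side is exactly $\sum_{k\geq 0}\ln G_k^{(1)}$, using $F(t)=F(x,y,u_4,q;t)$ as in~\eqref{Def:FtF}.

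The second sum is handled symmetrically using the $q^{-1}$-integral. From~\eqref{int} with $q$ replaced by $1/q$ one has $\int_0^t f(z)\,d_{1/q}z = t(1-1/q)\sum_{k\geq 0}f(t/q^{k})\,q^{-k}$. Substituting $z=t/q^{k}$ into the integrand $\ln\bigl[1-(q-1)z\beta(u_4+u_2\tilde F(z))\bigr]^{-1}\!/\!\bigl[(1-1/q)z\bigr]$ and simplifying collapses the prefactor to $1$, so the integral evaluates to
\[
\sum_{k\geq 0}\ln\!\Big[1-\tfrac{t\beta}{q^{k}}(q-1)\bigl(u_4+u_2\tilde F(t/q^{k})\bigr)\Big]^{-1}
\;=\;\sum_{k\geq 0}\ln G_k^{(2)},
\]
where $\tilde F(t)=F(x,y,u_3,1/q;t)$ by~\eqref{Def:FtF}. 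Exponentiating and combining with the previous display yields Theorem~\ref{main2}.

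The computation has no serious obstacle; the one place that needs care is book-keeping the constants $(1-q)$, $(1-1/q)$, and the sign $q-1=-(1-q)$ so that the arguments of the two logarithms in the integrands match the two brackets in $G_k$ after the substitutions $z=tq^{k}$ and $z=tq^{-k}$. Once this is verified, the ``reduction to Theorem~\ref{Jires} when $q\to 1$'' mentioned in the paper follows from~\eqref{qtonq} together with the standard $q\to 1$ limits of $\exp_q$ and $F(x,y,u,q;t)$, turning each $q$-integral into an ordinary Riemann integral of an elementary logarithmic integrand that can be evaluated in closed form.
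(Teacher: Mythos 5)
Your proposal is correct and follows essentially the same route as the paper: both reduce Theorem~\ref{main2} to Theorem~\ref{main} by identifying $\sum_{k\geq 0}\ln G_k^{(1)}$ and $\sum_{k\geq 0}\ln G_k^{(2)}$ with the two $q$-integrals. Your version is in fact a slight streamlining, since you apply the definition~\eqref{int} directly to the logarithmic integrand, whereas the paper expands $\ln(1-w)^{-1}=\sum_{n\geq1}w^n/n$, interchanges the sums over $n$ and $k$, and then re-sums inside the integral; the two computations are equivalent.
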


\begin{proof}
As 
$\ln(1-t)^{-1}=\sum_{n\geq1}\frac{t^n}{n}$,
 we have
\begin{align*}
  &\quad\prod_{k\geq 0}\Big[1-t\a q^k(1-q)(u_3 +u_2 F(tq^{k+1}))\Big]^{-1}\\
  &=\exp\bigg(\ln\Big(\prod_{k\geq 0}[1-t\a q^k(1-q)(u_3 +u_2 F(tq^{k+1}))]^{-1}\Big)\bigg)\\
  &=\exp\bigg(\sum_{k\geq 0}\ln[1-t\a q^k(1-q)(u_3 +u_2 F(tq^{k+1}))]^{-1}\bigg)\\
  &=\exp\bigg(\sum_{k\geq 0}\sum_{n\geq 1}\frac{\big[t\a q^k(1-q)(u_3 +u_2 F(tq^{k+1}))\big]^n}{n}\bigg)\\
  &=\exp\bigg(\sum_{n\geq 1}\frac{(1-q)^{n-1}}{n}(1-q)t\sum_{k\geq 0}q^k (tq^k )^{n-1}(u_3\alpha +u_2\alpha F(tq^{k+1}))^n\bigg)\\
  &=\exp\bigg( \sum_{n\geq 1}\frac{(1-q)^{n-1}}{n} \int_{0}^{t} z^{n-1}\big(u_3\alpha +u_2\alpha F(zq)\big)^n d_q z\bigg)\qquad\text{By Def.~\eqref{int}}\\
  &=\exp\bigg(\int_{0}^{t}\sum_{n\geq 1}\frac{\big[(1-q)z\big(u_3\alpha +u_2\alpha F(zq)\big)\big]^n}{n(1-q)z} d_q z\bigg)\\
  &=\exp\bigg(\int_{0}^{t}\frac{\ln\big[1-(1-q)z\a\big(u_3 +u_2 F(zq)\big)\big]^{-1}}{(1-q)z} d_q z\bigg).
\end{align*}
Similarly, we have 
$$
\prod_{k\geq 0}\Big[1-\frac{t\b}{q^k} (q-1)(u_4 +u_2 \tilde F(tq^{-k}))\Big]^{-1}=\exp\bigg(\int_{0}^{t}\frac{\ln\big[1-(q-1)z\b\big(u_4 +u_2 \tilde F(z)\big)\big]^{-1}}{(1-\frac{1}{q})z} d_{\frac{1}{q}} z\bigg).
$$
The desired integral formula then follows from Theorem~\ref{main}. 
\end{proof}
 %%%%%%%%%

 Next, we  show that our integral formula in Theorem~\ref{main2} reduces to Theorem~\ref{Jires} when $q\rightarrow1$. We need the following simple lemma. 
 %%%%%%%%
 \begin{lem}
   Let $F(x,y;t)$ be defined in~\eqref{def:Fxy}. Then, 
  \begin{equation}\label{parint}
  \int_{0}^{t}xy F(x,y;z)dz=\ln\Big(\frac{x-y}{x e^{yt}-y e^{xt}}\Big).
  \end{equation}
\end{lem}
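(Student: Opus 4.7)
The plan is to verify the identity by differentiating the right-hand side and checking that both sides vanish at $t=0$. Set
$$G(t) := \ln\Bigl(\frac{x-y}{x e^{yt}-y e^{xt}}\Bigr) = \ln(x-y) - \ln\bigl(x e^{yt} - y e^{xt}\bigr).$$
At $t=0$ the denominator inside the logarithm equals $x-y$, so $G(0)=0$. The left-hand side also vanishes at $t=0$ since it is an integral from $0$ to $0$.

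Next I would compute $G'(t)$ using the chain rule. The derivative of the inside of the log is $xy e^{yt} - xy e^{xt} = -xy(e^{xt}-e^{yt})$, so
$$G'(t) = -\frac{-xy(e^{xt}-e^{yt})}{x e^{yt}-y e^{xt}} = xy\cdot\frac{e^{xt}-e^{yt}}{x e^{yt}-y e^{xt}} = xy\,F(x,y;t),$$
by the definition~\eqref{def:Fxy} of $F(x,y;t)$. Since both sides of~\eqref{parint} vanish at $t=0$ and have the same derivative in $t$, they agree identically, which proves the lemma.

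There is no real obstacle here; the only point requiring any care is bookkeeping the sign when differentiating $-\ln(x e^{yt}-y e^{xt})$ and recognizing the result as $xy$ times $F(x,y;t)$. One could alternatively expand $F(x,y;z)$ as a power series in $z$ and integrate term by term, but the antiderivative approach above is cleaner and matches precisely the form in which the identity will later be applied.
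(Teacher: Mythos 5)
Your proof is correct and follows essentially the same route as the paper: differentiate $-\ln\bigl(x e^{yt}-y e^{xt}\bigr)$, recognize the result as $xy\,F(x,y;t)$, and conclude by the fundamental theorem of calculus. The only difference is that you make the vanishing at $t=0$ explicit, which the paper leaves implicit.
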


  \begin{proof}
%  Recall that 
%  $$
%  F(x,y;t)=\frac{e^{xt}-e^{yt}}{x e^{yt}-y e^{xt}}.
%  $$
We have
\begin{equation*}
   \frac{d\big(-\ln[x e^{yt}-y e^{xt}]\big)}{dt}=\frac{xy(e^{xt}-e^{yt})}{x e^{yt}-y e^{xt}}=xyF(x,y;t),
  \end{equation*} 
from which the result then follows. 
\end{proof}
%%%%%%% 

We are now ready for a new proof of Theorem~\ref{Jires}. 

\begin{proof}[{\bf A new proof of Theorem~\ref{Jires}.}] 
When $q\to 1$, the two infinitesimal quantities 
$$\ln\big[1-(1-q)t\a\big(u_3 +u_2 F(tq)\big)\big]\quad\text{and}\quad (1-q)t\a\big(u_3+u_2 F(tq)\big)
$$ are equivalent, and so are  
$$\ln\big[1-(q-1)t\b\big(u_4+u_2 \tilde{F}(t)\big)\big]\quad\text{and}\quad (q-1)t\b\big(u_4 +u_2 \tilde F( t)\big),
$$
where $F(t)$ and $\tilde F(t)$ are defined in~\eqref{Def:FtF}. Thus, if we let $q\to 1$ in Theorem~\ref{main2}, then by~\eqref{qtonq} we have 
\begin{align*}
&\quad\sum_{n\geq 0}P_{n+1}(u_1,u_2,u_3,u_4|\a,\b)\frac{t^n}{n!}\\
&=\exp\Big(\int_{0}^{t}\a\big(u_3 +u_2 F(x,y,u_4,1;z)\big)dz +\int_{0}^{t}\b\big(u_4 +u_2 F(x,y,u_3,1;z)\big)dz\Big)\\
&=\exp\Big(\int_{0}^{t}\a\big(u_3 +xy F(x,y;z)\big)dz +\int_{0}^{t}\b\big(u_4 +xy F(x,y;z)\big)dz\Big)
\end{align*} with $xy=u_1u_2$ and $x+y=u_3+u_4$. Using~\eqref{parint} we get 
$$
\sum_{n\geq 0}P_{n+1}(u_1,u_2,u_3,u_4|\a,\b)\frac{t^n}{n!}=e^{(u_3\alpha+u_4\beta)t}\Big(\frac{x-y}{xe^{yt}-ye^{xt}}\Big)^{\a+\b},
$$
completing the proof. 
\end{proof}

%%%%%%%%%%%%%%%%%%%%%%%%%%%%%%%%%%%%%%%%
\section{Combinatorics of the $\gamma$-positivity of Stirling-Eulerian polynomials}%%
 %%%%%%%%%%%%%%%%%%%%%%%%%%%%%%%%%%%%%%%%
\label{Sec:4}

This section aims to prove Theorem~\ref{pk:LR} combinatorially by introducing a new group action on permutations.  

For $\sigma\in\S_n$ with $\lmin(\sigma)-1=k$ and $\rmin(\sigma)-1=l$, there exists a unique decomposition of $\sigma$ as
$$
\sigma=\a_1\a_2\cdots\a_k1\b_1\b_2\cdots\b_l,
$$
where 
\begin{itemize}
\item the first letter of each $\a_i$ ($1\leq i\leq k$), which is smallest  inside $\a_i$, is a left-to-right minimum of $\sigma$; 
\item  and the last letter of each $\b_i$ ($1\leq i\leq l$), which is smallest  inside $\b_i$, is a right-to-left minimum of $\sigma$. 
\end{itemize}
We call this decomposition the {\em bi-basic decomposition }of $\sigma$ and each $\a_i$ or $\b_i$ a block of $\sigma$. An example of the bi-basic decomposition  is 
\begin{equation}\label{bi-basic}
\sigma=\magenta{{\bf 5}\,10}|\magenta{{\bf2}\,12\,4\,13\,6}|{\bf1}|\blue{11\,{\bf3}}|\blue{9\,8\,15\,{\bf7}}|\blue{{\bf14}},
\end{equation}
where blocks are separated by bars. For each word $w=w_1w_2\cdots w_d$, let $w^r:=w_dw_{d-1}\cdots w_1$ be the reversal of $w$.  For each $x\in[n]$, we define $\psi_x(\sigma)$ according to the following cases
\begin{enumerate}
\item if $x$ is the first letter of  $\a_i$ for some $i$, then $\psi_x(\sigma)$ is obtained from $\sigma$ by deleting the block $\a_i$ and then inserting $\a_i^r$ in some proper gap between blocks so that $x$ becomes an additional right-to-left minimum;  
\item if $x$ is the first letter of  $\b_i$ for some $i$, then $\psi_x(\sigma)$ is obtained from $\sigma$ by deleting the block $\b_i$ and then inserting $\b_i^r$ in some proper gap between blocks so that $x$ becomes an additional left-to-right minimum;  
\item otherwise, set $\psi_x(\sigma)=\sigma$. 
\end{enumerate}
Taking $\sigma$ as in~\eqref{bi-basic}, then 
$$
\psi_2(\sigma)=\magenta{{\bf 5}\,10}|{\bf1}|\magenta{6\,13\,4\,12\,{\bf2}}|\blue{11\,{\bf3}}|\blue{9\,8\,15\,{\bf7}}|\blue{{\bf14}},
$$
and 
$$
\psi_3(\sigma)=\magenta{{\bf 5}\,10}|\blue{{\bf3}\,11}|\magenta{{\bf2}\,12\,4\,13\,6}|{\bf1}|\blue{9\,8\,15\,{\bf7}}|\blue{{\bf14}}. 
$$
It is clear that the map $\psi_x$ is an involution acting on $\S_n$, and for all $x, y \in [n]$, $\psi_x$ and $\psi_y$ commute. The following lemma is evident from  the construction of $\psi_x$. 

\begin{lem}\label{lem:psi}
If $x$ is a left-to-right (resp., right-to-left) minimum of $\sigma$, then $x$ becomes a right-to-left (resp., left-to-right) minimum of $\psi_x(\sigma)$. Moreover, $\psi_x$ preserves the peak statistic 
$``\widetilde\M"$.
\end{lem}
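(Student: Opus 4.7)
The lemma has two parts. The first part—that $x$ switches its LR/RL minimum status—would follow directly from the construction. In case (1), the reversed block $\a_i^r$ places $x$ at its end, and the insertion rule picks the unique gap among the $\b$-blocks where $\min(\b_j)<x<\min(\b_{j+1})$ holds (with the obvious endpoint conventions). Every element to the right of $x$ in $\psi_x(\sigma)$ then lies in some $\b_{j'}$ with $j'>j$, whose elements are all at least $\min(\b_{j'})>x$. Thus $x$ becomes a right-to-left minimum, and it ceases to be a left-to-right minimum because its immediate predecessor $y_1$ in $\a_i^r$ satisfies $y_1>x$. Case (2) is symmetric.

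For the preservation of $\widetilde\M$, the plan is to establish a \emph{block-decomposition identity}
\begin{equation*}
\widetilde\M(\sigma) = \sum_{i=1}^{k} P(\mathrm{body}(\a_i)) + \sum_{j=1}^{l} P(\mathrm{body}(\b_j)),
\end{equation*}
where the body of a block is what remains after removing its minimum, and $P(w)$ denotes the peak count of a word $w$ under the convention $w_0=w_{|w|+1}=-\infty$. The identity would rest on two observations. First, each block-minimum is a local minimum of $\sigma$ under the $+\infty$ boundary (its two neighbors in $\sigma$ are either the global $+\infty$ or an element strictly exceeding it, by the sorted block structure), so no block-minimum contributes a peak. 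Second, for any body position the external neighbor on the ``minimum side''—the block's own minimum for the inner body-endpoint, or the adjacent block's minimum for the outer body-endpoint—is strictly smaller than every body element; hence the relevant peak inequality is automatic on that side, matching the $-\infty$ boundary convention in $P$.

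With the decomposition in hand, preservation of $\widetilde\M$ under $\psi_x$ is immediate. The action replaces a single block $\a_i$ with body $y_1\cdots y_r$ by its reversal (as a $\b$-block) with body $y_r\cdots y_1$, leaving every other block untouched. Since $P$ is invariant under reversal of a word—a peak at position $j$ in $y_1\cdots y_r$ corresponds to a peak at position $r-j+1$ in the reversed word—the moved block's contribution is unchanged. Moreover, the value-sorted placement of the reversed block guarantees that every neighboring block still has external neighbors strictly smaller than all its body elements, so those contributions also remain unchanged, yielding $\widetilde\M(\psi_x(\sigma))=\widetilde\M(\sigma)$.

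I expect the main technical obstacle to be a careful case analysis in proving the block-decomposition identity: extremal blocks (where one external neighbor is the global $+\infty$ rather than an adjacent block's minimum), blocks of size one (with empty body, contributing zero), and the role of the central element $1$ must each be inspected to confirm the ``external-neighbor smaller than body'' property. These cases are routine but deserve explicit verification.
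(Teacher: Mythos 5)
The paper offers no argument here at all---it simply declares the lemma ``evident from the construction of $\psi_x$''---so your proposal is supplying a proof rather than paralleling one, and the strategy you choose is sound. The block-decomposition identity $\widetilde\M(\sigma)=\sum_i P(\mathrm{body}(\a_i))+\sum_j P(\mathrm{body}(\b_j))$ does hold (it checks out on the paper's running example $\sigma=5\,10\,2\,12\,4\,13\,6\,1\,11\,3\,9\,8\,15\,7\,14$, where both sides equal $6$), the external neighbors of every body are indeed smaller than all body elements because the $\a$-block minima decrease left to right and the $\b$-block minima increase left to right, and reversal-invariance of $P$ under the symmetric $-\infty$ boundary finishes the job. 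This is exactly the right way to make the paper's ``evident'' rigorous.

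Two of your supporting claims are stated incorrectly, though neither damages the argument. First, a block minimum need \emph{not} be a local minimum of $\sigma$: in the paper's example the $\b$-block $14$ has left neighbor $7<14$, and a singleton $\a$-block's right neighbor is the next, smaller, left-to-right minimum. What is true, and all you need, is that a left-to-right minimum always has a \emph{larger left} neighbor and a right-to-left minimum a \emph{larger right} neighbor (with the $+\infty$ boundary), so the peak condition fails on at least one side. Second, your reason that $x$ ceases to be a left-to-right minimum of $\psi_x(\sigma)$ is backwards: having an immediate predecessor $y_1>x$ is consistent with being a left-to-right minimum; the correct reason is that the reversed block is inserted to the right of the element $1$, so a \emph{smaller} element now precedes $x$. (This last point is not even asserted by the lemma, but it is needed for the orbit computations in Section~4, so it is worth getting right.) With these two local repairs your proof is complete.
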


We are now ready for the combinatorial proof of Theorem~\ref{pk:LR}.
\begin{proof}[{\bf Proof of Theorem~\ref{pk:LR}}]
Denote by $\LMI(\sigma)$ and $\RMI(\sigma)$ the set of left-to-right minima and the set of right-to-left minima of $\sigma$ other than $1$, respectively. 
In order to interpret the left-hand side of~\eqref{2pnk}, we introduce the set 
$$\widetilde\P_{n,k}=\{(\sigma,S): \sigma\in\P_{n,k}, S\subseteq\LMI(\sigma)\cup\RMI(\sigma)\}
$$ of marked permutations, where $\P_{n,k}:=\{\sigma\in\S_n|\widetilde\M(\sigma)=k\}$.  Thus, 
\begin{equation}\label{int:mark}
\sum_{\sigma\in\P_{n,k}}{(2\a)}^{\lmin(\sigma)-1}{(2\b)}^{\rmin(\sigma)-1}=\sum_{(\sigma,S)\in\widetilde\P_{n,k}}\a^{\lmin(\sigma)-1}\b^{\rmin(\sigma)-1}.
\end{equation}
For $x\in[n]$ and $(\sigma,S)\in\widetilde\P_{n,k}$, we define the action $\Psi_x$ on $\widetilde\P_{n,k}$ as   
$$\Psi_x(\sigma,S)=
\begin{cases}
(\psi_x(\sigma),S\triangle\{x\}),\quad &\text{if $x\in\LMI(\sigma)\cup\RMI(\sigma)$};\\
(\sigma,S), &\text{otherwise}. 
\end{cases}
$$ 
It is routine to check that all $\Psi_x$'s are involutions and commute. Therefore, for any $X\subseteq[n]$ we can define the function $\Psi_X:\widetilde\P_{n,k}\rightarrow\widetilde\P_{n,k}$ by $\Psi_X=\prod_{x\in X}\Psi_x$, where multiplication is the composition of functions. Hence the group $\Z_2^n$ acts on $\widetilde\P_{n,k}$ via the function $\Psi_X$. Let $\Orb(\sigma,S)$ be the orbit of $(\sigma,S)$ under this action. Inside each orbit $\Orb(\sigma,S)$ there exists a unique element $(\bar\sigma,\emptyset)$, where $\bar\sigma=\Psi_S(\sigma)$. It then follows from Lemma~\ref{lem:psi} that 
$$
\sum_{(\pi,T)\in\Orb(\sigma,S)}\a^{\lmin(\pi)-1}\b^{\rmin(\pi)-1}=(\a+\b)^{\lmin(\bar\sigma)+\rmin(\bar\sigma)-2}. 
$$
Summing over all orbits of $\widetilde\P_{n,k}$ then results in
$$
\sum_{(\sigma,S)\in\widetilde\P_{n,k}}\a^{\lmin(\sigma)-1}\b^{\rmin(\sigma)-1}=\sum_{\sigma\in\P_{n,k}}{(\a+\b)}^{\lmin(\sigma)+\rmin(\sigma)-2},
$$
which proves~\eqref{2pnk} in view of~\eqref{int:mark}.  
\end{proof} 

As a byproduct of the action $\psi_x$, we can also prove the following neat expression for $\gamma_{n,k}(\a,\b)$. 
\begin{thm}\label{pk:LR2}
For $n\geq1$, we have 
\begin{equation}\label{2pnk2}
\sum_{\sigma\in\P_{n,k}}{\a}^{\lmin(\sigma)-1}{\b}^{\rmin(\sigma)-1}=\sum_{\sigma\in\L_{n-1,k}}{(\a+\b)}^{\rmin(\sigma)},
\end{equation}
where $\L_{n-1,k}=\{\sigma\in\S_{n-1}: \M_{0}(\sigma)=k\}$. Consequently, 
\begin{equation*}
\gamma_{n,k}(\a,\b)=2^{2k+1-n}\sum_{\sigma\in\L_{n-1,k}}{(\a+\b)}^{\rmin(\sigma)}. 
\end{equation*}
\end{thm}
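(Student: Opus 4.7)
The plan is to redeploy the involutions $\psi_x$ from the previous proof, now acting directly on $\P_{n,k}$, and then finish via a simple ``strip-off-the-leading-$1$'' bijection to $\L_{n-1,k}$. The first step is to observe that $\psi_x$ acts on the bi-basic decomposition by removing exactly the block whose distinguished minimum is $x$, reversing it, and re-inserting the result into the unique gap of the opposite family where $x$ slots into its correct sorted position among the surviving minima of that type. Since every other block is untouched, the set $\LMI(\sigma)\cup\RMI(\sigma)$ is invariant under $\psi_x$, with only the LMI/RMI status of $x$ itself flipping. Combining this with the peak-preservation of Lemma~\ref{lem:psi}, the group $\Z_2^n$ acts on $\P_{n,k}$ via $\Psi_X=\prod_{x\in X}\psi_x$, each orbit $\Orb$ has size $2^m$ where $m:=|\LMI\cup\RMI|=\lmin+\rmin-2$ is orbit-invariant, and parametrising the orbit by the subset $T\subseteq\LMI\cup\RMI$ of left-to-right minima yields
\begin{align*}
\sum_{\pi\in\Orb}\a^{\lmin(\pi)-1}\b^{\rmin(\pi)-1}=\sum_{T\subseteq\LMI\cup\RMI}\a^{|T|}\b^{m-|T|}=(\a+\b)^m.
\end{align*}

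Next, I would pick the canonical representative $\sigma^\ast$ of each orbit by applying $\psi_x$ for every $x\in\LMI(\sigma)$, yielding the unique orbit element with $\LMI(\sigma^\ast)=\emptyset$. The only remaining possible left-to-right minimum is then $1$, forcing $\sigma^\ast_1=1$; moreover $\rmin(\sigma^\ast)-1=m$. Summing the previous identity over orbits gives
\begin{align*}
\sum_{\sigma\in\P_{n,k}}\a^{\lmin(\sigma)-1}\b^{\rmin(\sigma)-1}=\sum_{\substack{\sigma\in\P_{n,k}\\ \sigma_1=1}}(\a+\b)^{\rmin(\sigma)-1}.
\end{align*}
I would then define the reduction bijection $\phi\colon\{\sigma\in\P_{n,k}:\sigma_1=1\}\to\L_{n-1,k}$ by $\phi(\sigma):=(\sigma_2-1)(\sigma_3-1)\cdots(\sigma_n-1)$. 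Subtracting $1$ preserves all relative-order statistics, and a short boundary check shows that peaks of $\phi(\sigma)$ under $(0,+\infty)$ biject with peaks of $\sigma$ at positions $2,\ldots,n-1$ under $(+\infty,+\infty)$---positions $1$ and $n$ of $\sigma$ are never peaks under that boundary---so $\M_0(\phi(\sigma))=\widetilde\M(\sigma)=k$. The leading $1$ of $\sigma$ contributes one automatic right-to-left minimum that is stripped by $\phi$, giving $\rmin(\phi(\sigma))=\rmin(\sigma)-1$. Substituting into the previous identity yields~\eqref{2pnk2}, and the stated formula for $\gamma_{n,k}(\a,\b)$ is then immediate from Ji's expansion~\eqref{g:ab}.

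The main obstacle is the set-wise invariance of $\LMI\cup\RMI$ under $\psi_x$, which is stronger than Lemma~\ref{lem:psi} and requires a careful tracing of the bi-basic insertion rule that defines $\psi_x$. Once this is secured, the orbit sum, the characterisation $\sigma_1=1$ of canonical representatives, and the shift bijection $\phi$ are all routine verifications.
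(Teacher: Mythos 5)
Your proposal is correct and follows essentially the same route as the paper: the $\Z_2^n$-action on $\P_{n,k}$ generated by the $\psi_x$'s, the orbit sum $(\a+\b)^{\lmin+\rmin-2}$ anchored at the unique representative with first letter $1$, and the shift bijection $\sigma\mapsto(\sigma_2-1)\cdots(\sigma_n-1)$ onto $\L_{n-1,k}$. The extra care you take with the setwise invariance of $\LMI\cup\RMI$ and the boundary check for $\M_0$ versus $\widetilde\M$ is exactly what the paper leaves implicit in Lemma~\ref{lem:psi}.
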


\begin{proof}
We consider the function $\psi_X: \P_{n,k}\rightarrow\P_{n,k}$ defined by $\psi_X=\prod_{x\in X}\psi_x$. Then, the group $\Z_2^n$ acts on $\P_{n,k}$ via the function $\psi_X$. Let $\Orb(\sigma)$ be the orbit of $\sigma$ under this action. Inside each orbit $\Orb(\sigma)$ there exists a unique element $\bar\sigma$ whose first letter is $1$. It then follows from Lemma~\ref{lem:psi} that 
$$
\sum_{\pi\in\Orb(\sigma)}\a^{\lmin(\pi)-1}\b^{\rmin(\pi)-1}=(\a+\b)^{\rmin(\bar\sigma)-1}. 
$$
Summing over all orbits of $\P_{n,k}$ then results in
\begin{align*}
\sum_{\sigma\in\P_{n,k}}{\a}^{\lmin(\sigma)-1}{\b}^{\rmin(\sigma)-1}&=\sum_{\sigma\in\P_{n,k}\atop \sigma_1=1}(\a+\b)^{\rmin(\bar\sigma)-1}\\
&=\sum_{\sigma\in\L_{n-1,k}}{(\a+\b)}^{\rmin(\sigma)},
\end{align*}
where the last equality follows from the simple one-to-one correspondence 
$$
\sigma\mapsto (\sigma_2-1)(\sigma_3-1)\cdots(\sigma_n-1)
$$ between $\{\sigma\in\P_{n,k}:\sigma_1=1\}$ and $\L_{n-1,k}$. 
\end{proof}

The {\em Euler numbers} $\{E_n\}_{n\geq0}$ can be  defined by the Taylor expansion 
 $$
 \sec(t)+\tan(t)=\sum_{n\geq0} E_n\frac{t^n}{n!}.
 $$
 The numbers $E_{n}$ with even indices and odd indices  are known as {\em secant numbers} and {\em tangent numbers}, respectively.
A permutation $\sigma\in\S_n$ is  {\em alternating} if 
$$
\sigma_1>\sigma_2<\sigma_3>\sigma_4<\sigma_5>\cdots.
$$
Andr\'e~\cite{And} proved that $|\mathcal{A}_n|=E_n$, where  $\mathcal{A}_n$ denotes the set of all alternating permutations of length $n$.     An immediate consequence of Theorem~\ref{pk:LR2} is the following interesting interpretation of the $\a$-extension of secant  numbers 
$$
  A_{2n+1}(-1,1|\a/2,\a/2)=(-1)^{n}\sum_{\sigma\in\mathcal{A}_{2n}}\a^{\rmin(\sigma)},
 $$
 which was proved by Ji~\cite{Ji} via different approach. 
 
\section{Concluding remarks}

The main achievement of this paper is the computation of  the  generating function for a $q$-analog of $P_n(u_1,u_2,u_3,u_4|\a,\b)$
by using Gessel's $q$-compositional formula.  In particular, Ji's formula for $P_n(u_1,u_2,u_3,u_4|\a,\b)$ in Theorem~\ref{Jires} can be proved directly from Carlitz and Scoville's formula~\eqref{eq:Carlitz} by using the classical  compositional formula (see~\cite[Thm.~5.1.4]{St2}). However, it should be noted that this approach via the  compositional formula does not work for the variation of $P_n(u_1,u_2,u_3,u_4|\a,\b)$: 
$$
\tilde P_n(u_1,u_2,u_3,u_4|\a,\b):=\sum_{\sigma  \in \S_n}u_1^{\widetilde\V(\sigma )-1}u_2^{\widetilde\M(\sigma )}u_3^{\tilde\da(\sigma )}u_4^{\tilde\dd(\sigma )}\alpha^{\lma(\sigma)-1}\beta^{\rma(\sigma)-1}.
$$
It remains open to compute the generating function for $\tilde P_n(u_1,u_2,u_3,u_4|\a,\b)$. 

A statistic whose distribution over $\S_n$ gives $[n]_q!$ is called a {\em Mahonian} statistic. It is well known that `$\inv$' is a  Mahonian statistic on permutations. Another classical Mahonian statistic is the {\em major index} (see~\cite[Sec.~1.3]{St})
$$
\maj(\sigma)=\sum_{\sigma_i>\sigma_{i+1}}i. 
$$
A third Mahonian statistic arising from poset topology was introduced by Shareshian and Wachs~\cite{SW} as `$\ai+\des$', where $\ai(\sigma)$ is the number of {\em admissible inversions} of $\sigma$ (see also~\cite{LZ}). The generating functions for the four pairs of Euler--Mahonian statistics in $\{\exc,\des\}\times\{\inv,\maj\}$ have already been calculated in the literature; see~\cite{SW}. In view of Theorem~\ref{main}, it would be interesting to investigate systematically the generating functions for the Stirling--Euler--Mahonian statistics on permutations in 
$$
\{\lmin,\rmin,\lma,\rma,\cyc\}\times\{\des,\exc,\M\}\times\{\inv,\maj,\ai\}
$$
and their possible  refinements. Some related results in this direction have been obtained by Sokal and Zeng~\cite{SZ}.

%There is a way to compute the exponential generating function for $\tilde P_n(u_1,u_2,u_3,u_4|\a,\b)$ that we outline as follow. Introduce  the exponential generating function
% $$
% \tilde L(t):=\sum_{n\geq1}\sum_{\sigma  \in \S_n}u_1^{\widetilde\V(\sigma )}u_2^{\widetilde\M(\sigma )}u_3^{\tilde\da(\sigma )}u_4^{\tilde\dd(\sigma )}\alpha^{\lma(\sigma)}\frac{t^n}{n!}.
% $$
% By considering the positions of the greatest  letter $n$ in permutations in $\S_n$, we can obtain the differential equation for  $\tilde L(t)$
%\begin{equation}\label{ODE}
%\tilde L'(t)-\tilde L(t)(\a u_2u_1 F(x,y;t)+\a u_3)=\a u_1+\a u_4u_1 F(x,y;t),
%\end{equation}
%where $F(x,y;t)$ is defined in~\eqref{def:Fxy}, $x+y=u_3+u_4$ and $xy=u_1u_2$. Solving~\eqref{ODE} by  the method of constant variation gives 
%$$
%\tilde L(t)=\Bigl(\frac{e^{u_3t}}{xe^{yt}-ye^{xt}}\Bigr)^{\a}\Bigl(C+\a u_1\int(1+u_4 F(x,y;t))\Bigl(\frac{e^{u_3t}}{xe^{yt}-ye^{xt}}\Bigr)^{-\a} dt\Bigr)
%$$
\section*{Statements and Declarations}
%%%%%
The authors declare that they have no known competing financial interests or personal relationships that could have appeared to influence the work reported in this paper.
 
 %%%%%%%%%%%%%%%%%
 \section*{Acknowledgement}%%%%
 %%%%%%%%%%%%%%%%%

This work was supported by the National Science Foundation of China grants 12271301, 12322115 and 12201468. Part of this work was done  while the first  author was visiting the third author at Wenzhou University in the winter of 2023.

\end{document}